\newcommand{\bbC}{\mathbb{C}}
\newcommand{\bbR}{\mathbb{R}}
\newcommand{\rmI}{\mathrm{I}}
\newcommand{\rmN}{\mathrm{N}}
\newcommand{\Tr}{\mathrm{Tr}}
\newcommand{\set}[1]{\{{#1}\}}
\newcommand{\norm}[1]{\|{#1}\|}
\newtheorem{problem}[theorem]{Problem}
\newtheorem{example}[theorem]{Example}
\title{Constructing all self-adjoint matrices with prescribed spectrum and diagonal}
\author{Matthew~Fickus\thanks{Department of Mathematics and Statistics, Air Force Institute of Technology, Wright-Patterson Air Force Base, Ohio 45433, Matthew.Fickus@afit.edu.} \and Dustin~G.~Mixon\thanks{Program in Applied and Computational Mathematics, Princeton University, Princeton, New Jersey 08544.} \and Miriam~J.~Poteet\thanks{Department of Mathematics and Statistics, Air Force Institute of Technology, Wright-Patterson Air Force Base, Ohio 45433.} \and Nate Strawn\thanks{Department of Mathematics, Duke University, Durham, North Carolina 27708}}
\begin{document}

\maketitle

\begin{abstract}
The Schur-Horn Theorem states that there exists a self-adjoint matrix with a given spectrum and diagonal if and only if the spectrum majorizes the diagonal.  Though the original proof of this result was nonconstructive, several constructive proofs have subsequently been found.  Most of these constructive proofs rely on Givens rotations, and none have been shown to be able to produce every example of such a matrix.  We introduce a new construction method that is able to do so.  This method is based on recent advances in finite frame theory which show how to construct frames whose frame operator has a given prescribed spectrum and whose vectors have given prescribed lengths.  This frame construction requires one to find a sequence of eigensteps, that is, a sequence of interlacing spectra that satisfy certain trace considerations.  In this paper, we show how to explicitly construct every such sequence of eigensteps.  Here, the key idea is to visualize eigenstep construction as iteratively building a staircase.  This visualization leads to an algorithm, dubbed Top Kill, which produces a valid sequence of eigensteps whenever it is possible to do so.  We then build on Top Kill to explicitly parametrize the set of all valid eigensteps.  This yields an explicit method for constructing all self-adjoint matrices with a given spectrum and diagonal, and moreover all frames whose frame operator has a given spectrum and whose elements have given lengths.
\end{abstract}

\begin{keywords} 
Schur-Horn, interlacing, majorization, frames
\end{keywords}

\begin{AMS}
42C15
\end{AMS}

%%%%%%%%%%%%%%%%%%%%%%%%%%%%%%%%%%%%%%%%%%%%%%%%%%%%%%%%%%%%%%%%
%%%%%%%%%%%%%%%%%%%%%%%%%%%%%%%%%%%%%%%%%%%%%%%%%%%%%%%%%%%%%%%%
% Section 1: Introduction
%%%%%%%%%%%%%%%%%%%%%%%%%%%%%%%%%%%%%%%%%%%%%%%%%%%%%%%%%%%%%%%%
%%%%%%%%%%%%%%%%%%%%%%%%%%%%%%%%%%%%%%%%%%%%%%%%%%%%%%%%%%%%%%%%

\section{Introduction}

Given nonincreasing sequences $\set{\lambda_n}_{n=1}^{N}$ and $\set{\mu_n}_{n=1}^{N}$, consider the problem of finding an $N\times N$ self-adjoint matrix $G$ which has $\set{\lambda_n}_{n=1}^{N}$ as its spectrum and $\set{\mu_n}_{n=1}^{N}$ as its diagonal entries.  The question of whether or not such a matrix exists is addressed by the classical \textit{Schur-Horn Theorem}.  To be precise, $\set{\lambda_n}_{n=1}^{N}$ is said to \textit{majorize} $\set{\mu_n}_{n=1}^{N}$, denoted $\set{\lambda_n}_{n=1}^N\succeq\set{\mu_n}_{n=1}^{N}$, if
\begin{align}
\label{equation.definition of majorization 1}
\sum_{m=1}^{n}\lambda_{m}
&\geq\sum_{m=1}^{n}\mu_{m}\qquad\forall n=1,\dotsc,N-1,\\
\label{equation.definition of majorization 2}
\sum_{m=1}^{N}\lambda_{m}
&=\sum_{m=1}^{N}\mu_{m}.
\end{align}
Schur~\cite{Schur:23} found that the spectrum of a self-adjoint matrix necessarily majorizes its diagonal entries.  A few decades later, Horn proved the converse~\cite{Horn:54}, yielding:

\textsc{Schur-Horn Theorem}. \textit{There exists a self-adjoint matrix $G$ with spectrum $\set{\lambda_n}_{n=1}^{N}$ and diagonal entries $\set{\mu_n}_{n=1}^{N}$ if and only if $\set{\lambda_n}_{n=1}^N\succeq\set{\mu_n}_{n=1}^{N}$.}

Horn's original proof was nonconstructive.  In subsequent decades, several constructive proofs were found.  In particular, the Chan-Li algorithm~\cite{ChanLi:83} conjugates a given diagonal matrix by a finite number of Givens rotations so as to produce a self-adjoint matrix with a given majorized diagonal.  Related algorithms and their generalizations are considered in~\cite{BendelM:78,DavisH:00,DhillonHST:05}.  Such matrices can also be constructed by an optimization-based limiting process~\cite{Chu:95}.  Alternative algebraic proofs of the Schur-Horn Theorem are given in~\cite{LeiteRT:99}.

In this paper, we provide a new method for constructing Schur-Horn matrices.  In contrast to previous work, this method constructs \textit{all} such matrices.  This method relies on recent developments~\cite{CahillFMPS:11} in the field of \textit{finite frame theory}.  To be precise, the \textit{synthesis operator} of a sequence of vectors $F=\set{f_n}_{n=1}^N$ in $\bbC^M$ is $F:\bbC^N\rightarrow\bbC^M$, $\smash{Fg:=\sum_{n=1}^N g(n)f_n}$.  That is, $F$ is the $M\times N$ matrix whose columns are the $f_n$'s.  Note we make no notational distinction between a sequence of vectors $F=\set{f_n}_{n=1}^{N}$ and the synthesis operator $F$ they induce.  The vectors $F$ are said to be a \textit{frame} for $\bbC^M$ if there exists \textit{frame bounds} $0<A\leq B<\infty$ such that $A\norm{f}^2\leq\norm{F^*f}^2\leq B\norm{f}^2$ for all $f\in\bbC^M$.  The optimal frame bounds $A$ and $B$ of $F$ are the least and greatest eigenvalues of the \textit{frame operator} $FF^*=\sum_{n=1}^{N}f_n^{}f_n^*$, respectively.  As such, $F$ is a frame if and only if the $f_n$'s span $\bbC^M$, which necessitates $M\leq N$.  Broadly speaking, finite frame theory is the study of how to construct $F$ so that $FF^*$ is well-conditioned and so that the $f_n$'s satisfy some additional application-specific, nonlinear constraints.

In particular, over the past decade, much attention was paid to the problem of constructing \textit{unit norm tight frames} (UNTFs), namely frames for which $FF^*=A\rmI$ for some $A>0$ and for which $\norm{f_n}^2=1$ for all $n$.  Such frames yield linear encoders which are optimally robust with respect to additive noise~\cite{GoyalVT:98} and erasures~\cite{CasazzaK:03,HolmesP:04}, and are a generalization of the encoders used in CDMA~\cite{TroppDH:04,ViswanathA:99}.  Unfortunately, such frames are also nontrivial to construct.  Indeed, prior to~\cite{CahillFMPS:11}, only a few explicit examples of such frames where known for any given $M$ and $N$~\cite{CasazzaFMWZ:11,CasazzaL:06,DhillonHST:05,GoyalKK:01}, despite the fact that the set of all such UNTFs contains manifolds of nontrivial dimension when $M>N+1$~\cite{DykemaS:06,Strawn:11}.  Much of the recent work on UNTFs has focused on the \textit{Paulsen problem}~\cite{BodmannC:10,CasazzaFM:11}, a type of Procrustes problem~\cite{Higham:89} concerning how a given frame should be perturbed in order to make it more like a UNTF.  This in turn spurred interest in the following problem:
\begin{problem}
\label{problem.main}
Given any nonnegative nonincreasing sequences $\set{\lambda_m}_{m=1}^{M}$ and $\set{\mu_n}_{n=1}^{N}$, construct all $F=\set{f_n}_{n=1}^{N}$ whose frame operator $FF^*$ has spectrum $\set{\lambda_m}_{m=1}^{M}$ and for which $\norm{f_n}^2=\mu_n$ for all $n$.
\end{problem}

Note that Problem~\ref{problem.main} relates to the Schur-Horn Theorem since the \textit{Gram matrix} $F^*F$ of $F$ has diagonal entries $\set{\mu_n}_{n=1}^{N}$ while the spectrum of $F^*F$ is a zero-padded version of $\set{\lambda_m}_{m=1}^{M}$ when $M\leq N$; this connection is highlighted in~\cite{AntezanaMRS:07,MasseyR:08}.

In this paper, we build on the main results of~\cite{CahillFMPS:11} to provide a complete constructive solution to Problem~\ref{problem.main}, and then use that result to construct all Schur-Horn matrices.  In particular, \cite{CahillFMPS:11} makes use of an observation, nicely explained in~\cite{HornJ:85}, that majorization is simply the end result of the repeated application of a more basic idea: eigenvalue interlacing.  Specifically, a sequence $\set{\beta_m}_{m=1}^{M}$ \textit{interlaces} on another sequence $\set{\alpha_m}_{m=1}^{M}$, denoted $\set{\alpha_m}_{m=1}^{M}\sqsubseteq\set{\beta_m}_{m=1}^{M}$, provided $\alpha_M\leq\beta_M$ and $\beta_{m+1}\leq\alpha_m\leq\beta_m$ for all $m=1,\dotsc,M-1$.  Interlacing naturally arises in the context of frame theory by considering partial sums of the frame operator $FF^*$.  In particular, for any $n=1,\dotsc,N$, the frame operator of the $M\times n$ synthesis operator $F_n$ of the partial sequence of vectors $F_n=\set{f_m}_{m=1}^n$ is
\begin{equation}
\label{equation.partial frame operator}
F_n^{}F_n^*=\sum_{m=1}^{n}f_m^{}f_m^*.
\end{equation}
Letting $\set{\lambda_{n;m}}_{m=1}^{M}$ denote the spectrum of $F_n^{}F_n^*$, a classical result~\cite{HornJ:85} implies that the spectrum $\set{\lambda_{n+1;m}}_{m=1}^{M}$ of $F_{n+1}^{}F_{n+1}^*=F_n^{}F_n^*+f_{n+1}^{}f_{n+1}^*$ interlaces on $\set{\lambda_{n;m}}_{m=1}^{M}$.  Moreover, if $\norm{f_n}^2=\mu_n$ for all $n$, then $\set{\lambda_{n;m}}_{m=1}^{M}$ must also satisfy
\begin{equation}
\label{equation.trace condition}
\sum_{m=1}^M\lambda_{n;m}
=\Tr(F_n^{}F_n^*)
=\Tr(F_n^*F_n^{})
=\sum_{m=1}^{n}\norm{f_m}^2
=\sum_{m=1}^n\mu_m.
\end{equation}
In~\cite{CahillFMPS:11}, a sequence of interlacing spectra which satisfy the trace conditions \eqref{equation.trace condition} is called a sequence of \textit{eigensteps}; in this paper, we call them \textit{outer eigensteps}:
\begin{definition}
\label{definition.outer eigensteps}
Let $\set{\lambda_m}_{m=1}^{M}$ and $\set{\mu_n}_{n=1}^{N}$ be nonnegative and nonincreasing.  A corresponding sequence of \textbf{outer eigensteps} is a sequence of sequences $\set{\set{\lambda_{n;m}}_{m=1}^{M}}_{n=0}^{N}$ which satisfies the following four properties:
\begin{enumerate}
\renewcommand{\labelenumi}{(\roman{enumi})}
\item $\lambda_{0;m}=0$ for every $m=1,\ldots,M$,
\item $\lambda_{N;m}=\lambda_m$ for every $m=1,\ldots,M$,
\item $\set{\lambda_{n-1;m}}_{m=1}^{M}\sqsubseteq\set{\lambda_{n;m}}_{m=1}^{M}$ for every $n=1,\ldots,N$,
\item $\sum_{m=1}^{M}\lambda_{n;m}=\sum_{m=1}^{n}\mu_{m}$ for every $n=1,\ldots,N$.
\end{enumerate}
\end{definition}
In light of the above discussion, any $F=\set{f_n}_{n=1}^{N}$ for which $FF^*$ has $\set{\lambda_m}_{m=1}^{M}$ as its spectrum and for which $\norm{f_n}^2=\mu_n$ for all $n$ generates a sequence of outer eigensteps.  The main result of~\cite{CahillFMPS:11} proves that the converse is also true:
\begin{theorem}[Theorem~$2$ of \cite{CahillFMPS:11}]
\label{theorem.necessity and sufficiency of eigensteps}
For any nonnegative nonincreasing sequences $\set{\lambda_m}_{m=1}^{M}$ and $\set{\mu_n}_{n=1}^{N}$, every sequence of vectors $F=\set{f_n}_{n=1}^{N}$ in $\bbC^M$ whose frame operator $FF^*$ has spectrum $\set{\lambda_m}_{m=1}^{M}$ and which satisfies $\norm{f_n}^2=\mu_n$ for all $n$ can be constructed by the following process:
\begin{enumerate}
\renewcommand{\labelenumi}{\Alph{enumi}.}
\item
Pick outer eigensteps $\set{\set{\lambda_{n;m}}_{m=1}^{M}}_{n=0}^{N}$ as in Definition~\ref{definition.outer eigensteps}. 
\item
For each $n=1,\dotsc,N$, consider the polynomial:
\begin{equation*}
p_n(x):=\prod_{m=1}^{M}(x-\lambda_{n;m}).
\end{equation*}
Take any $f_1\in\bbC^M$ such that $\norm{f_1}^2=\mu_1$.

For each $n=1,\dotsc,N-1$, choose any $f_{n+1}$ such that:
\begin{equation}
\label{equation.necessity and sufficiency of eigensteps 2}
\norm{P_{n;\lambda}f_{n+1}}^2=-\lim_{x\rightarrow\lambda}(x-\lambda)\frac{p_{n+1}(x)}{p_n(x)}\qquad \forall \lambda\in\set{\lambda_{n;m}}_{m=1}^M.
\end{equation}
Here, $P_{n;\lambda}$ denotes the orthogonal projection operator onto the eigenspace $\rmN(\lambda\rmI-F_n^{}F_n^*)$ of the frame operator~\eqref{equation.partial frame operator} of $F_n:=\set{f_m}_{m=1}^{n}$.\\
The limit in~\eqref{equation.necessity and sufficiency of eigensteps 2} necessarily exists and is nonpositive.
\end{enumerate}
Conversely, any $F$ constructed by this process has $\set{\lambda_m}_{m=1}^{M}$ as the spectrum of $FF^*$ and $\norm{f_n}^2=\mu_n$ for all $n$, and moreover, $F_n^{}F_n^*$ has spectrum $\set{\lambda_{n;m}}_{m=1}^{M}$.
\end{theorem}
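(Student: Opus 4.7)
The plan is to reduce both directions of the theorem to a single rational-function identity relating consecutive characteristic polynomials. Setting $p_n(x):=\det(x\rmI-F_n^{}F_n^*)$, I would apply the matrix determinant lemma to the rank-one update $F_{n+1}^{}F_{n+1}^*=F_n^{}F_n^*+f_{n+1}^{}f_{n+1}^*$, combined with the resolvent expansion $(x\rmI-F_n^{}F_n^*)^{-1}=\sum_{\lambda}(x-\lambda)^{-1}P_{n;\lambda}$ over the distinct eigenvalues of $F_n^{}F_n^*$, to obtain
\begin{equation*}
\frac{p_{n+1}(x)}{p_n(x)}=1-\sum_{\lambda\in\set{\lambda_{n;m}}_{m=1}^M}\frac{\norm{P_{n;\lambda}f_{n+1}}^2}{x-\lambda}.
\end{equation*}
Multiplying by $(x-\lambda)$ and letting $x\to\lambda$ then simultaneously shows that the limit in~\eqref{equation.necessity and sufficiency of eigensteps 2} exists, equals $-\norm{P_{n;\lambda}f_{n+1}}^2$, and is therefore nonpositive. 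This single identity will carry essentially all of the real work.

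For the necessity direction, given any $F$ with the prescribed spectrum and norms, I would take $\set{\lambda_{n;m}}_{m=1}^M$ to be the spectrum of $F_n^{}F_n^*$ and verify the four conditions of Definition~\ref{definition.outer eigensteps}: (i) and (ii) are immediate since $F_0^{}F_0^*=0$ and $F_N=F$; (iii) is the classical interlacing inequality for rank-one positive semidefinite perturbations; (iv) is the trace computation~\eqref{equation.trace condition}. The projection identity~\eqref{equation.necessity and sufficiency of eigensteps 2} is then just the residue extraction described above.

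For the sufficiency direction, I would induct on $n$. Assuming $F_n^{}F_n^*$ has spectrum $\set{\lambda_{n;m}}_{m=1}^M$ and that $f_{n+1}$ is chosen with the projection norms prescribed by~\eqref{equation.necessity and sufficiency of eigensteps 2}, the interlacing condition~(iii) forces the target rational function $p_{n+1}(x)/p_n(x)$ (built from the chosen eigensteps) to have only simple poles, so the matrix determinant lemma identity together with uniqueness of partial-fraction decompositions gives $\det(x\rmI-F_{n+1}^{}F_{n+1}^*)=p_{n+1}(x)$, yielding the desired spectrum. Summing residues then gives
\begin{equation*}
\norm{f_{n+1}}^2=\sum_{\lambda}\norm{P_{n;\lambda}f_{n+1}}^2=\sum_{m=1}^M\lambda_{n+1;m}-\sum_{m=1}^M\lambda_{n;m}=\mu_{n+1},
\end{equation*}
where the first equality uses the orthogonal decomposition of $\bbC^M$ into eigenspaces of $F_n^{}F_n^*$, the second uses the $1/x$-asymptotics of the identity at infinity (both $p_n$ and $p_{n+1}$ are monic of degree $M$), and the last uses condition~(iv).

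The main obstacle will be making step B of the procedure executable, i.e.\ verifying that the prescribed residues in~\eqref{equation.necessity and sufficiency of eigensteps 2} are genuinely nonnegative so that a vector $f_{n+1}\in\bbC^M$ realizing them actually exists. This demands a careful sign analysis of $p_{n+1}(x)/p_n(x)$ at each (possibly repeated) eigenvalue $\lambda_{n;m}$, controlling the orders of vanishing of $p_n$ and $p_{n+1}$ there via the interlacing condition~(iii). Once nonnegative residues summing to $\mu_{n+1}$ are in hand, existence of $f_{n+1}$ is immediate, since the $P_{n;\lambda}$ project onto mutually orthogonal subspaces whose direct sum is $\bbC^M$, so one may simply pick a vector of the prescribed length in each eigenspace and add them up.
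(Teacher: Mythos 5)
The paper does not prove this theorem itself: it is imported verbatim as Theorem~2 of \cite{CahillFMPS:11}, and the text only remarks that the key idea of that proof is to write $p_{n+1}(x)$ in terms of $p_n(x)$. Your matrix-determinant-lemma identity $p_{n+1}(x)/p_n(x)=1-\sum_{\lambda}\norm{P_{n;\lambda}f_{n+1}}^2/(x-\lambda)$ is precisely such an expression, and your sketch---residue extraction for necessity, induction with partial-fraction uniqueness for sufficiency, and an interlacing-based sign analysis to guarantee the prescribed residues are nonnegative---is a sound route that matches the approach the paper alludes to.
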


We emphasize that Theorem~\ref{theorem.necessity and sufficiency of eigensteps} is proven from basic principles in~\cite{CahillFMPS:11}, the key idea being to write $p_{n+1}(x)$ in terms of $p_{n}(x)$, a fact also recently exploited in~\cite{BatsonSS:11}.  In particular, the proof of Theorem~\ref{theorem.necessity and sufficiency of eigensteps} does not rely on the Schur-Horn Theorem.  We further note that, although Theorem~\ref{theorem.necessity and sufficiency of eigensteps} provides an answer to Problem~\ref{problem.main}, this answer is incomplete.  To be clear, Step~B involves only standard algebraic techniques, and it can be made surprisingly explicit; see Theorem~7 of~\cite{CahillFMPS:11}.  In fact, \cite{FickusMP:11} provides MATLAB code to implement Step~B.  Step~A, on the other hand, is vague: for a given $\set{\lambda_m}_{m=1}^{M}$ and $\set{\mu_n}_{n=1}^{N}$ it is unclear how to construct a single valid sequence of outer eigensteps, much less find them all.  The techniques of this paper will make Step~A explicit, with our main result being:
\begin{theorem}
\label{theorem.Step A redone}
Let $\set{\lambda_m}_{m=1}^{M}$ and $\set{\mu_n}_{n=1}^{N}$ be nonnegative and nonincreasing where $M\leq N$.  There exists a sequence of vectors $F=\set{f_n}_{n=1}^{N}$ in $\bbC^M$ whose frame operator $FF^*$ has spectrum $\set{\lambda_m}_{m=1}^{M}$ and for which $\norm{f_n}^2=\mu_n$ for all $n$ if and only if $\set{\lambda_m}_{m=1}^{M}\cup\{0\}_{m=M+1}^N\succeq\set{\mu_n}_{n=1}^{N}$.  Moreover, if $\set{\lambda_m}_{m=1}^{M}\cup\{0\}_{m=M+1}^N\succeq\set{\mu_n}_{n=1}^{N}$, then every such $F$ can be constructed by the following process:
\begin{enumerate}
\renewcommand{\labelenumi}{\Alph{enumi}.}
\item
Let $\{\lambda_{N;m}\}_{m=1}^M:=\{\lambda_{m}\}_{m=1}^M$.\\
For $n=N,\ldots,2$, construct $\{\lambda_{n-1;m}\}_{m=1}^M$ in terms of $\set{\lambda_{n;m}}_{m=1}^{M}$ as follows:\\
\mbox{\quad}For each $k=M,\ldots,1$, if $k>n-1$, take $\lambda_{n-1;k}:=0$.\\
\mbox{\quad}Otherwise, pick any $\lambda_{n-1;k}\in[A_{n-1;k},B_{n-1;k}]$, where
\begin{align*}
A_{n-1;k}&:=\max \bigg\{ \lambda_{n;k+1}, \sum_{m=k}^{M} \lambda_{n;m} - \sum_{m=k+1}^{M} \lambda_{n-1;m} - \mu_n \bigg\},\\
B_{n-1;k}&:=\min \bigg\{ \lambda_{n;k}, \min_{{l}=1,\dots,k} \bigg\{\sum_{m={l}}^{n-1} \mu_m - \sum_{m={l}+1}^{k} \lambda_{n;m} - \sum_{m=k+1}^{M} \lambda_{n-1;m} \bigg\}\bigg\}.
\end{align*}
Here, we use the convention that $\lambda_{n;M+1}=0$, and that sums over empty sets of indices are zero.
\item
Follow Step~B of Theorem~\ref{theorem.necessity and sufficiency of eigensteps}.
\end{enumerate}
Conversely, any $F$ constructed by this process has $\set{\lambda_m}_{m=1}^{M}$ as the spectrum of $FF^*$ and $\norm{f_n}^2=\mu_n$ for all $n$, and moreover, $F_n^{}F_n^*$ has spectrum $\set{\lambda_{n;m}}_{m=1}^{M}$.
\end{theorem}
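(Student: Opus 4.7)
The argument splits naturally into three pieces. First, \emph{necessity} of the majorization is immediate: given $F$, apply Schur's direction of the Schur--Horn theorem to the Gram matrix $F^*F$, which is $N\times N$ self-adjoint with diagonal entries $\{\mu_n\}_{n=1}^N$ and spectrum $\{\lambda_m\}_{m=1}^M\cup\{0\}_{m=M+1}^N$ (since $FF^*$ and $F^*F$ share nonzero eigenvalues). This yields $\{\lambda_m\}_{m=1}^M\cup\{0\}_{m=M+1}^N\succeq\{\mu_n\}_{n=1}^N$. Second, the frame-construction portion of the statement (Step~B and the concluding sentence relating $F$ to $F_n^{}F_n^*$) is inherited directly from Theorem~\ref{theorem.necessity and sufficiency of eigensteps} once the outer eigensteps are in hand. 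It therefore suffices to prove the core claim: Step~A generates precisely the valid outer eigenstep sequences of Definition~\ref{definition.outer eigensteps}, and is non-vacuous exactly when the majorization holds.

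For this core claim I would proceed by reverse induction on $n$, running from $N$ down to $0$. The inductive invariant is that the partially constructed tail $\{\{\lambda_{n';m}\}_{m=1}^M\}_{n'=n}^N$ extends to a valid outer eigenstep sequence on $\{0,1,\dotsc,N\}$ if and only if $\{\lambda_{n;m}\}_{m=1}^M$ is nonincreasing with $\lambda_{n;m}=0$ for $m>n$, satisfies the trace equation $\sum_{m=1}^M\lambda_{n;m}=\sum_{m=1}^n\mu_m$, and satisfies the partial-sum majorization $\sum_{m=1}^\ell\lambda_{n;m}\ge\sum_{m=1}^\ell\mu_m$ for $\ell=1,\dotsc,\min(M,n-1)$. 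The base case $n=N$ is precisely the hypothesis of the theorem. In the inductive step $n\to n-1$ I would verify: (a) the intervals $[A_{n-1;k},B_{n-1;k}]$ are non-empty for each $k=\min(M,n-1),\dotsc,1$, in the order Step~A processes them; (b) every choice from those intervals yields a $\{\lambda_{n-1;m}\}_{m=1}^M$ that interlaces with $\{\lambda_{n;m}\}_{m=1}^M$ and satisfies the invariant at level $n-1$; and (c) conversely, every $\{\lambda_{n-1;m}\}_{m=1}^M$ that interlaces with $\{\lambda_{n;m}\}_{m=1}^M$ and satisfies the invariant arises from such choices.

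The main obstacle is (a). The pure interlacing pair of terms in $A_{n-1;k}$ and $B_{n-1;k}$ gives the trivial $\lambda_{n;k+1}\le\lambda_{n;k}$. Comparing the interlacing upper bound with the trace-based lower bound reduces, via the lower bounds already enforced on $\lambda_{n-1;m}$ for $m>k$, to an inequality that telescopes cleanly using $\sum_{m=1}^M(\lambda_{n;m}-\lambda_{n-1;m})=\mu_n$. The most delicate comparison is between the trace-based $A$-term and the $\ell$-indexed majorization $B$-terms, and it reduces directly to the level-$n$ invariant $\sum_{m=1}^\ell\lambda_{n;m}\ge\sum_{m=1}^\ell\mu_m$: this is precisely the invariant that propagates downward and keeps intervals non-empty all the way to $n=0$. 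For (b), the key observation is that the upper bound on each $\lambda_{n-1;k+1}$ with parameter $\ell=1$ forces $\sum_{m=k+1}^M\lambda_{n-1;m}\le\sum_{m=1}^{n-1}\mu_m-\sum_{m=2}^{k+1}\lambda_{n;m}$, which together with the trace-based lower bounds eventually collapses $A_{n-1;1}$ and $B_{n-1;1}$ to the single value $\sum_{m=1}^{n-1}\mu_m-\sum_{m=2}^M\lambda_{n-1;m}$, automatically enforcing the trace equality at level $n-1$; similarly the $\ell$-indexed bounds for each $k$ translate directly into the partial-sum majorization at level $n-1$, preserving the invariant. The converse (c) is then a reorganization: the interlacing, trace, and partial-sum majorization conditions at level $n-1$ are exactly the aggregate of the inequalities $A_{n-1;k}\le\lambda_{n-1;k}\le B_{n-1;k}$ read off for each $k$ in turn.
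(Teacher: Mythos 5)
Your proposal is correct in substance but takes a genuinely different route in two respects. First, you invoke Schur's direction of the Schur--Horn theorem to establish necessity of the majorization; the paper deliberately derives necessity internally, chaining Theorem~\ref{theorem.necessity and sufficiency of eigensteps} (a frame yields outer eigensteps), Theorem~\ref{theorem.inner vs outer} (outer $\leftrightarrow$ inner), and the one-line interlacing-implies-majorization observation at the start of Section~3. The paper's goal is a self-contained constructive reproof of Schur--Horn, so your route would be circular from that vantage point, though it is logically fine if Schur's half is taken as a black box. Second, you run the reverse induction directly in the outer-eigenstep coordinates $\{\lambda_{n;m}\}_{m=1}^M$ with an explicit running invariant (nonincreasing, trace, partial-sum majorization), whereas the paper factors that entire core out as a standalone lemma---Theorem~\ref{theorem.parametrize eigensteps} together with Corollary~\ref{corollary.parametrize eigensteps}---stated for inner eigensteps of length exactly $n$, and then reduces Theorem~\ref{theorem.Step A redone} to a zero-padding reconciliation through Theorem~\ref{theorem.inner vs outer}. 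The inner form buys a cleaner statement: there are no dead padding zeros and no $k>n-1$ case splits, so the interval non-emptiness induction on $k$ is uncluttered. The content is otherwise the same: your verification sketch for (a)--(c), the observation that the trace-based $A$-term and the $\ell=1$ upper bound collapse at $k=1$ to force the trace equality at level $n-1$, and the identification of the $\ell$-indexed $B$-terms with tail-sum majorization at level $n-1$ all reproduce the checks in the paper's proof of Theorem~\ref{theorem.parametrize eigensteps}. One small imprecision worth flagging: checking the trace-based $A$-term against the $\ell$-indexed $B$-terms (inequality (iv) in that proof) requires the nonincreasing property of $\{\lambda_{n;m}\}_m$ in addition to the level-$n$ partial-sum majorization you cite, since one must bound $\lambda_{n;k}$ by $\lambda_{n;\ell}$; both facts are part of your stated invariant, so this is a sketching shortcut rather than an error.
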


In the next section, we discuss how solving Problem~\ref{problem.main} via Theorem~\ref{theorem.Step A redone} suffices to construct all Schur-Horn matrices.  We also introduce an alternative notion of eigensteps: whereas outer eigensteps give the spectra of the partial frame operators $F_n^{}F_n^{*}$, \textit{inner eigensteps} will give the spectra of the partial Gram matrices $F_n^*F_n^{}$.  It turns out that this second notion of eigensteps simplifies the needed analysis.  In Section $3$, we then visualize the inner eigenstep construction problem in terms of iteratively building a staircase.  This visualization suggests a new algorithm, dubbed \textit{Top Kill}, which produces a valid sequence of eigensteps whenever it is possible to do so.  In the fourth section, we further exploit the intuition behind Top Kill to find an explicit parametrization of the set of all valid inner eigensteps, leading to a proof of Theorem~\ref{theorem.Step A redone} and thus an explicit construction of all Schur-Horn matrices.

%%%%%%%%%%%%%%%%%%%%%%%%%%%%%%%%%%%%%%%%%%%%%%%%%%%%%%%%%%%%%%%%
%%%%%%%%%%%%%%%%%%%%%%%%%%%%%%%%%%%%%%%%%%%%%%%%%%%%%%%%%%%%%%%%
% Section 2: Preliminaries
%%%%%%%%%%%%%%%%%%%%%%%%%%%%%%%%%%%%%%%%%%%%%%%%%%%%%%%%%%%%%%%%
%%%%%%%%%%%%%%%%%%%%%%%%%%%%%%%%%%%%%%%%%%%%%%%%%%%%%%%%%%%%%%%%

\section{Preliminaries}

In this section, we further detail the connection between the Schur-Horn Theorem and Problem~\ref{problem.main}, and then we reformulate Step~A of Theorem~\ref{theorem.necessity and sufficiency of eigensteps} in terms of an alternative but equivalent notion of eigensteps, dubbed \textit{inner eigensteps}.  With regards to the first point, this connection stems from letting the Schur-Horn matrix $G$ be the Gram matrix $F^*F$ of the sequence of vectors $F=\set{f_n}_{n=1}^{N}$.  

To be precise, given nonnegative nonincreasing sequences $\set{\lambda_m}_{m=1}^{M}$ and $\set{\mu_n}_{n=1}^{N}$ where $M\leq N$, the Schur-Horn Theorem implies that Problem~\ref{problem.main} is feasible if and only if $\set{\mu_n}_{n=1}^{N}$ is majorized by $\set{\lambda_m}_{m=1}^{M}$ padded with $N-M$ zeros.  Indeed, if Problem~\ref{problem.main} has a solution $F$, then $G=F^*F$ has spectrum $\set{\lambda_m}_{m=1}^{M}\cup\set{0}_{m=M+1}^{N}$ and diagonal $\set{\mu_n}_{n=1}^{N}$, and so $\set{\mu_n}_{n=1}^{N}\preceq\set{\lambda_m}_{m=1}^{M}\cup\set{0}_{m=M+1}^{N}$ by the Schur-Horn Theorem.   Conversely, if $\set{\mu_n}_{n=1}^{N}\preceq\set{\lambda_m}_{m=1}^{M}\cup\set{0}_{m=M+1}^{N}$, then the corresponding Schur-Horn matrix $G$ can be unitarily diagonalized:
\begin{equation*}
G=VDV^*=\begin{bmatrix}V_1&V_2\end{bmatrix}\begin{bmatrix}D_1&0\\0&0\end{bmatrix}\begin{bmatrix}V_1^*\\V_2^*\end{bmatrix}=V_1^{}D_1^{}V_1^*,
\end{equation*}
where $D_1$ is an $M\times M$ diagonal matrix with diagonal $\set{\lambda_m}_{m=1}^{M}$; the matrix \smash{$F=D_1^{\frac12}V_1^*$} is then one solution to Problem~\ref{problem.main}.  This line of reasoning is well-known~\cite{AntezanaMRS:07,DhillonHST:05}.

In this paper, we follow an alternative approach that is modeled on that of~\cite{HornJ:85}: rather than use the Schur-Horn Theorem to determine the feasibility of Problem~\ref{problem.main}, we instead independently find all solutions to Problem~\ref{problem.main}, see Theorem~\ref{theorem.Step A redone}, and then use these matrices to construct all Schur-Horn matrices.  To be precise, note that though the Schur-Horn Theorem applies to all self-adjoint matrices $G$, it suffices to consider the case where $G$ is positive semidefinite.  Indeed, any self-adjoint matrix $\hat{G}$ can be written as $\hat{G}=G+\alpha\rmI$ where $G$ is positive semidefinite and $\alpha\leq\lambda_{\min}(\hat{G})$; it is straightforward to show that the spectrum $\set{\hat{\lambda}_n}_{n=1}^{N}$ of $\hat{G}$ majorizes its diagonal $\set{\hat{\mu}_n}_{n=1}^{N}$ if and only if the spectrum $\set{\lambda_n}_{n=1}^{N}=\set{\hat{\lambda}_n-\alpha}_{n=1}^{N}$ of $G$ majorizes its diagonal $\set{\mu_n}_{n=1}^{N}=\set{\hat{\mu}_n-\alpha}_{n=1}^{N}$.  Moreover, since $G$ is positive semidefinite, it has a Cholesky factorization $G=F^*F$ where $F\in\bbC^{N\times N}$.  Regarding $F$ as the synthesis operator of some sequence of vectors $\set{f_n}_{n=1}^{N}$ in $\bbC^N$, we are thus reduced to Problem~\ref{problem.main} in the special case where $M=N$.  Presuming for the moment that Theorem~\ref{theorem.Step A redone} is true, we summarize the above discussion as follows:
\begin{theorem}
\label{theorem.self-adjoint to positive semidefinite}
Given nonincreasing sequences $\set{\hat{\lambda}_n}_{n=1}^{N}$ and $\set{\hat{\mu}_n}_{n=1}^{N}$ such that $\set{\hat{\lambda}_n}_{n=1}^{N}\succeq\set{\hat{\mu}_n}_{n=1}^{N}$, every matrix $\hat{G}$ with spectrum $\set{\hat{\lambda}_n}_{n=1}^{N}$ and diagonal $\set{\hat{\mu}_n}_{n=1}^{N}$ can be constructed as $\hat{G}=F^*F+\alpha\rmI$ where $F$ is any matrix constructed by taking any $\alpha\leq\lambda_{\min}(\hat{G})$ and applying Theorem~\ref{theorem.Step A redone} where $\lambda_n:=\hat{\lambda}_n-\alpha$ and $\mu_n:=\hat{\mu}_n-\alpha$.  Moreover, any $\hat{G}$ constructed in this fashion has the desired spectrum and diagonal.
\end{theorem}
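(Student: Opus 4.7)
The plan is to treat Theorem~\ref{theorem.self-adjoint to positive semidefinite} as a direct corollary of Theorem~\ref{theorem.Step A redone} via the two-step reduction already sketched in the discussion preceding the statement: first translate $\hat G$ by a scalar multiple of $\rmI$ to produce a positive semidefinite matrix $G$, and then realize $G$ as a Gram matrix $F^*F$ via Cholesky factorization. The square case $M=N$ of Theorem~\ref{theorem.Step A redone} will do all of the real work.

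For the converse direction, I would take an $F\in\bbC^{N\times N}$ produced by the process of Theorem~\ref{theorem.Step A redone} applied to the shifted data $\lambda_n:=\hat\lambda_n-\alpha$ and $\mu_n:=\hat\mu_n-\alpha$.  That theorem guarantees $\norm{f_n}^2=\mu_n$ for all $n$ and that $FF^*$ has spectrum $\set{\lambda_m}_{m=1}^N$; since $F$ is square, $F^*F$ and $FF^*$ share the same spectrum, so $\hat G:=F^*F+\alpha\rmI$ has spectrum $\set{\lambda_m+\alpha}_{m=1}^N=\set{\hat\lambda_n}_{n=1}^N$ and diagonal entries $(F^*F)_{nn}+\alpha=\norm{f_n}^2+\alpha=\hat\mu_n$, as prescribed.

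For the forward direction, given any self-adjoint $\hat G$ with spectrum $\set{\hat\lambda_n}_{n=1}^N$ and diagonal $\set{\hat\mu_n}_{n=1}^N$ satisfying $\set{\hat\lambda_n}\succeq\set{\hat\mu_n}$, I would pick any $\alpha\leq\lambda_{\min}(\hat G)=\hat\lambda_N$ and set $G:=\hat G-\alpha\rmI$, which is positive semidefinite with spectrum $\lambda_n=\hat\lambda_n-\alpha\geq 0$ and diagonal $\mu_n=\hat\mu_n-\alpha\geq 0$.  The essential check is that majorization transfers through the translation: each partial sum in \eqref{equation.definition of majorization 1}--\eqref{equation.definition of majorization 2} shifts by $n\alpha$ on both sides, so $\set{\hat\lambda_n}\succeq\set{\hat\mu_n}$ is equivalent to $\set{\lambda_n}\succeq\set{\mu_n}$.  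In the square case $M=N$ this matches the majorization hypothesis of Theorem~\ref{theorem.Step A redone} exactly, since the padding with zeros is vacuous.  A Cholesky factorization then yields $F\in\bbC^{N\times N}$ with $G=F^*F$; its columns satisfy $\norm{f_n}^2=G_{nn}=\mu_n$, and $FF^*$ inherits the spectrum $\set{\lambda_m}_{m=1}^N$ of $F^*F$.  Thus $F$ satisfies the hypothesis of the ``every such $F$ can be constructed'' clause of Theorem~\ref{theorem.Step A redone}, so it lies in the family produced by that process, and $\hat G=F^*F+\alpha\rmI$ is realized as claimed.

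Assuming Theorem~\ref{theorem.Step A redone} is in hand, the main obstacle here is essentially nonexistent, since the argument is pure bookkeeping.  The only two points requiring any care are the translation invariance of the majorization conditions, which is immediate from the partial-sum definition, and the need to invoke \emph{completeness} rather than mere existence in Theorem~\ref{theorem.Step A redone} in order to place the Cholesky factor of $G$ in the image of the construction.
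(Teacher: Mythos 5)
Your proposal is correct and follows essentially the same route as the paper: shift $\hat G$ by $\alpha\rmI$ to a positive semidefinite $G$, observe that majorization is invariant under the translation, take a Cholesky factorization $G=F^*F$, and invoke the completeness clause of Theorem~\ref{theorem.Step A redone} in the square case $M=N$ to place this $F$ in the image of the construction. The paper presents this as a brief summary of the discussion preceding the theorem statement, and your write-up fills in the same bookkeeping (including the key observation that completeness, not mere existence, is what puts the Cholesky factor in the constructed family).
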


We are thus reduced to solving Problem~\ref{problem.main}, that is, proving Theorem~\ref{theorem.Step A redone}; this problem is the focus of the remainder of this paper.  In light of Theorem~\ref{theorem.necessity and sufficiency of eigensteps}, solving Problem~\ref{problem.main} boils down to finding every valid sequence of outer eigensteps $\set{\set{\lambda_{n;m}}_{m=1}^{M}}_{n=0}^{N}$, see Definition~\ref{definition.outer eigensteps}, for any given nonnegative nonincreasing sequences $\set{\lambda_m}_{m=1}^{M}$ and $\set{\mu_n}_{n=1}^{N}$.  We now briefly summarize an example of such an eigenstep characterization problem given in~\cite{CahillFMPS:11}:
\begin{example}
\label{example.5 in 3 outer}
Consider the problem of constructing all $3\times 5$ matrices $F$ with unit norm columns such that $FF^*=\frac53\rmI$, namely Problem~\ref{problem.main} in the special case where $M=3$, $N=5$, $\lambda_1=\lambda_2=\lambda_3=\frac53$ and $\mu_1=\mu_2=\mu_3=\mu_4=\mu_5=1$.  In~\cite{CahillFMPS:11}, it is shown that every valid sequence of corresponding outer eigensteps $\set{\set{\lambda_{n;m}}_{m=1}^{3}}_{n=0}^{5}$ is of the form:
\begin{equation}
\label{equation.5 in 3 outer 1}
\begin{tabular}{p{1cm} p{1cm} p{1cm} p{1cm} p{1cm} p{1cm} l}
\ \,$n$&$0$&$1$&$2$&$3$&$4$&$5${\smallskip}\\ 
\hline\noalign{\smallskip}
$\lambda_{n;3}$&$0$ &$0$ &$0$ 	&$x$ &$\frac23$ &$\frac{5}3${\smallskip}\\
$\lambda_{n;2}$&$0$ &$0$ &$y$ 	&$\frac{4}3-x$ &$\frac{5}3$ &$\frac{5}3${\smallskip}\\
$\lambda_{n;1}$&$0$ &$1$ &$2-y$ &$\frac{5}3$ &$\frac{5}3$ &$\frac{5}3$\\
\end{tabular}
\end{equation}
where $x$ and $y$ are restricted so as to satisfy the interlacing requirements (iii) of Definition~\ref{definition.outer eigensteps}.  Specifically, $x$ and $y$ must satisfy the eleven inequalities:
\begin{align}
\nonumber
\{\lambda_{3,m}\}_{m=1}^3\sqsubseteq\{\lambda_{4,m}\}_{m=1}^3&\quad\Longleftrightarrow\quad x\leq\tfrac23\leq\tfrac43-x\leq\tfrac53,\\
\label{equation.5 in 3 outer 2}
\{\lambda_{2,m}\}_{m=1}^3\sqsubseteq\{\lambda_{3,m}\}_{m=1}^3&\quad\Longleftrightarrow\quad 0\leq x\leq y\leq\tfrac43-x\leq2-y\leq\tfrac53,\\
\nonumber
\{\lambda_{1,m}\}_{m=1}^3\sqsubseteq\{\lambda_{2,m}\}_{m=1}^3&\quad\Longleftrightarrow\quad 0\leq y\leq 1\leq 2-y,
\end{align}
which can be simplified to $0\leq x\leq\frac23$, $\max\set{\frac13,x}\leq y\leq\min\set{\frac23+x,\frac43-x}$.  Choosing any such $(x,y)$ completes Step~A of the algorithm of Theorem~\ref{theorem.necessity and sufficiency of eigensteps}; the corresponding eigensteps~\eqref{equation.5 in 3 outer 1} are then used in Step~B to produce a $3\times 5$ matrix $F$.  For example, if $(x,y)=(0,\frac13)$, one particular implementation of Step~B~\cite{CahillFMPS:11} yields the matrix:
\begin{equation*}
F=\begin{bmatrix}1&\frac23&-\frac1{\sqrt6} & -\frac16 &\frac16\smallskip\\0&\frac{\sqrt5}3&\frac{\sqrt5}{\sqrt6}&\frac{\sqrt5}6&-\frac{\sqrt5}6\smallskip\\0&0&0&\frac{\sqrt5}{\sqrt6}&\frac{\sqrt5}{\sqrt6}\end{bmatrix}.
\end{equation*}
\end{example}

The previous example highlights the key obstacle in using Theorem~\ref{theorem.necessity and sufficiency of eigensteps} to solve Problem~\ref{problem.main}: finding all valid sequences of eigensteps~\eqref{equation.5 in 3 outer 1} often requires reducing a large system of linear inequalities~\eqref{equation.5 in 3 outer 2}.  In the following sections, we provide an efficient method for reducing such systems.  It turns out that this method is more easily understood in terms of an alternative but equivalent notion of eigensteps.  To be clear, for any given sequence of outer eigensteps $\set{\set{\lambda_{n;m}}_{m=1}^{M}}_{n=0}^{N}$, recall from Theorem~\ref{theorem.necessity and sufficiency of eigensteps} that for any $n=1,\dotsc,N$, the sequence $\set{\lambda_{n;m}}_{m=1}^{M}$ is the spectrum of the $M\times M$ frame operator~\eqref{equation.partial frame operator} of the $n$th partial sequence $F_n=\set{f_{m}}_{n=1}^{n}$.  In the theory that follows, it is more convenient to instead work with the spectrum $\set{\lambda_{n;m}}_{m=1}^{n}$ of the corresponding $n\times n$ Gram matrix $F_n^*F_n^{}$; we use the same notation for both spectra since $\set{\lambda_{n;m}}_{m=1}^{n}$ is a zero-padded version of $\set{\lambda_{n;m}}_{m=1}^{M}$ or vice versa, depending on whether $n>M$ or $n\leq M$.   We refer to the values $\set{\set{\lambda_{n;m}}_{m=1}^{n}}_{n=1}^{N}$ as a sequence of \textit{inner eigensteps} since they arise from matrices of inner products of the $f_n$'s (Gram matrices), whereas outer eigensteps $\set{\set{\lambda_{n;m}}_{m=1}^{M}}_{n=0}^{N}$ arise from sums of outer products of the $f_n$'s (frame operators).  To be precise:
\begin{definition}
\label{definition.inner eigensteps}
Let $\set{\lambda_n}_{n=1}^{N}$ and $\set{\mu_n}_{n=1}^{N}$ be nonnegative nonincreasing sequences.  A corresponding sequence of \textbf{inner eigensteps} is a sequence of sequences $\set{\set{\lambda_{n;m}}_{m=1}^{n}}_{n=1}^{N}$ which satisfies the following three properties:
\begin{enumerate}
\renewcommand{\labelenumi}{(\roman{enumi})}
\item $\lambda_{N;m}=\lambda_m$ for every $m=1,\ldots,N$,
\item $\set{\lambda_{n-1;m}}_{m=1}^{n-1}\sqsubseteq\set{\lambda_{n;m}}_{m=1}^{n}$ for every $n=2,\dotsc,N$,
\item $\sum_{m=1}^{n}\lambda_{n;m}=\sum_{m=1}^{n}\mu_{m}$ for every $n=1,\dotsc,N$.
\end{enumerate}
\end{definition}
To clarify, unlike the outer eigensteps of Definition~\ref{definition.outer eigensteps}, the interlacing relation (ii) here involves two sequences of different length; we write $\set{\alpha_m}_{m=1}^{n-1}\sqsubseteq\set{\beta_m}_{m=1}^{n}$ if $\beta_{m+1}\leq\alpha_m\leq\beta_m$ for all $m=1,\ldots,n-1$.  As the next example illustrates, inner and outer eigensteps can be put into correspondence with each other:
\begin{example}
\label{example.5 in 3}
We revisit Example~\ref{example.5 in 3 outer}.  Here, we pad $\set{\lambda_m}_{m=1}^{3}$ with two zeros so as to match the length of $\set{\mu_n}_{n=1}^{5}$.  That is, $\lambda_1=\lambda_2=\lambda_3=\frac53$, $\lambda_4=\lambda_5=0$, and $\mu_1=\mu_2=\mu_3=\mu_4=\mu_5=1$.  We find every sequence of inner eigensteps $\set{\set{\lambda_{n;m}}_{m=1}^{n}}_{n=1}^{5}$, namely every table of the form:
\begin{equation}
\label{equation.5 in 3 inner 0}
\begin{tabular}{p{1cm} p{1cm} p{1cm} p{1cm} p{1cm} l}
\ \,$n$&$1$&$2$&$3$&$4$&$5${\smallskip}\\ 
\hline\noalign{\smallskip}
$\lambda_{n;5}$	&	&	&	&	&$0$		{\smallskip}\\
$\lambda_{n;4}$	&	&	&	&?	&$0$		{\smallskip}\\
$\lambda_{n;3}$	&	&	&?	&?	&$\frac53$	{\smallskip}\\
$\lambda_{n;2}$	&	&?	&?	&?	&$\frac53$	{\smallskip}\\
$\lambda_{n;1}$	&?	&?	&?	&?	&$\frac53$
\end{tabular}
\end{equation}
that satisfies the interlacing properties (ii) and trace conditions (iii) of Definition~\ref{definition.inner eigensteps}.  To be precise, (ii) gives us $0=\lambda_{5;5}\leq\lambda_{4;4}\leq\lambda_{5;4}=0$ and so $\lambda_{4;4}=0$.  Similarly, $\frac53\leq\lambda_{5;3}\leq\lambda_{4;2}\leq\lambda_{3;1}\leq\lambda_{4;1}\leq\lambda_{5;1}=\frac53$ and so $\lambda_{4;2}=\lambda_{3;1}=\lambda_{4;1}=\frac53$, yielding:
\begin{equation}
\label{equation.5 in 3 inner 1}
\begin{tabular}{p{1cm} p{1cm} p{1cm} p{1cm} p{1cm} l}
\ \,$n$&$1$&$2$&$3$&$4$&$5${\smallskip}\\ 
\hline\noalign{\smallskip}
$\lambda_{n;5}$	&	&	&			&			&$0$		{\smallskip}\\
$\lambda_{n;4}$	&	&	&			&$0$		&$0$		{\smallskip}\\
$\lambda_{n;3}$	&	&	&?			&?			&$\frac53$	{\smallskip}\\
$\lambda_{n;2}$	&	&?	&?			&$\frac53$	&$\frac53$	{\smallskip}\\
$\lambda_{n;1}$	&?	&?	&$\frac53$	&$\frac53$	&$\frac53$
\end{tabular}
\end{equation}
Meanwhile, since $\mu_m=1$ for all $m$, the trace conditions (iii) give that the values in the $n$th column of~\eqref{equation.5 in 3 inner 1} sum to $n$.  Thus, $\lambda_{1;1}=1$ and $\lambda_{4;3}=\frac23$:
\begin{equation*}
\begin{tabular}{p{1cm} p{1cm} p{1cm} p{1cm} p{1cm} l}
\ \,$n$&$1$&$2$&$3$&$4$&$5${\smallskip}\\ 
\hline\noalign{\smallskip}
$\lambda_{n;5}$	&	&	&			&			&$0$		{\smallskip}\\
$\lambda_{n;4}$	&	&	&			&$0$		&$0$		{\smallskip}\\
$\lambda_{n;3}$	&	&	&?			&$\frac23$	&$\frac53$	{\smallskip}\\
$\lambda_{n;2}$	&	&?	&?			&$\frac53$	&$\frac53$	{\smallskip}\\
$\lambda_{n;1}$	&$1$&?	&$\frac53$	&$\frac53$	&$\frac53$
\end{tabular}
\end{equation*}
To proceed, we label $\lambda_{3;3}$ as $x$ and $\lambda_{2;2}$ as $y$, at which point (iii) uniquely determines $\lambda_{3;2}$ and $\lambda_{2;1}$:
\begin{equation}
\label{equation.5 in 3 inner 2}
\begin{tabular}{p{1cm} p{1cm} p{1cm} p{1cm} p{1cm} l}
\ \,$n$&$1$&$2$&$3$&$4$&$5${\smallskip}\\ 
\hline\noalign{\smallskip}
$\lambda_{n;5}$	&	&		&				&			&$0$		{\smallskip}\\
$\lambda_{n;4}$	&	&		&				&$0$		&$0$		{\smallskip}\\
$\lambda_{n;3}$	&	&		&$x$			&$\frac23$	&$\frac53$	{\smallskip}\\
$\lambda_{n;2}$	&	&$y$	&$\frac43-x$ 	&$\frac53$	&$\frac53$	{\smallskip}\\
$\lambda_{n;1}$	&$1$&$2-y$	&$\frac53$		&$\frac53$	&$\frac53$
\end{tabular}
\end{equation}
For our particular choice of $\set{\lambda_n}_{n=1}^{5}$ and $\set{\mu_n}_{n=1}^{5}$, the above argument shows that every corresponding sequence of inner eigensteps is of the form \eqref{equation.5 in 3 inner 2}.  Conversely, one may immediately verify that any $\set{\set{\lambda_{n;m}}_{m=1}^{n}}_{n=1}^{5}$ of this form satisfies (i) and (iii) of Definition~\ref{definition.inner eigensteps} and moreover satisfies (ii) when $n=5$.  However, in order to satisfy (ii) for $n=2,3,4$, $x$ and $y$ must be chosen so that they satisfy the ten inequalities:
\begin{align}
\nonumber
\{\lambda_{3,m}\}_{m=1}^3\sqsubseteq\{\lambda_{4,m}\}_{m=1}^4&\quad\Longleftrightarrow\quad 0\leq x\leq\tfrac23\leq\tfrac43-x\leq\tfrac53,\\
\label{equation.5 in 3 inner 3}
\{\lambda_{2,m}\}_{m=1}^2\sqsubseteq\{\lambda_{3,m}\}_{m=1}^3&\quad\Longleftrightarrow\quad x\leq y\leq\tfrac43-x\leq2-y\leq\tfrac53,\\
\nonumber
\{\lambda_{1,m}\}_{m=1}^1\sqsubseteq\{\lambda_{2,m}\}_{m=1}^2&\quad\Longleftrightarrow\quad y\leq 1\leq 2-y.
\end{align}
A quick inspection reveals the system~\eqref{equation.5 in 3 inner 3} to be equivalent to the one derived in the outer eigenstep formulation \eqref{equation.5 in 3 outer 2} presented in Example~\ref{example.5 in 3 outer}, which is reducible to $0\leq x\leq\frac23$, $\max\set{\frac13,x}\leq y\leq\min\set{\frac23+x,\frac43-x}$.  Moreover, we see that the outer eigensteps~\eqref{equation.5 in 3 outer 1} that arise from $\set{\lambda_1,\lambda_2,\lambda_3}=\set{\frac53,\frac53,\frac53}$ and the inner eigensteps~\eqref{equation.5 in 3 inner 2} that arise from $\set{\lambda_1,\lambda_2,\lambda_3,\lambda_4,\lambda_5}=\set{\frac53,\frac53,\frac53,0,0}$ are but zero-padded versions of each other; the next result claims that such a result holds in general.
\end{example}

\begin{theorem}
\label{theorem.inner vs outer}
Let  $\{\lambda_n\}_{n=1}^N$ and $\{\mu_n\}_{n=1}^N$ be nonnegative and nonincreasing, and choose any $M\leq N$ such that $\lambda_n=0$ for every $n>M$.   Then every choice of outer eigensteps (Definition~\ref{definition.outer eigensteps}) corresponds to a unique choice of inner eigensteps (Definition~\ref{definition.inner eigensteps}) and vice versa, the two being zero-padded versions of each other.

Specifically, a sequence of outer eigensteps $\{\{\lambda_{n;m}\}_{m=1}^M\}_{n=0}^N$ gives rise to a sequence of inner eigensteps $\{\{\lambda_{n;m}\}_{m=1}^n\}_{n=1}^N$, where $\lambda_{n;m}:=0$ whenever $m>M$.  Conversely, a sequence of inner eigensteps $\{\{\lambda_{n;m}\}_{m=1}^n\}_{n=1}^N$ gives rise to a sequence of outer eigensteps $\{\{\lambda_{n;m}\}_{m=1}^M\}_{n=0}^N$, where $\lambda_{n;m}:=0$ whenever $m>n$.  

Moreover, $\{\lambda_{n;m}\}_{m=1}^M$ is the spectrum of the frame operator $F_n^{}F_n^*$ of $F_n=\{f_m\}_{m=1}^n$ if and only if $\{\lambda_{n;m}\}_{m=1}^n$ is the spectrum of the Gram matrix $F_n^*F_n^{}$.
\end{theorem}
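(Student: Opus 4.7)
The plan is to establish that outer and inner eigensteps are in bijective correspondence via zero-padding, and then to deduce the spectral claim from the classical identity that $AA^*$ and $A^*A$ share the same nonzero eigenvalues with multiplicity. The technical heart is a pair of vanishing lemmas, after which the equivalence reduces to a straightforward translation of the defining properties.

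The two vanishing lemmas are: any outer eigenstep satisfies $\lambda_{n;m}=0$ whenever $n<m\le M$, and any inner eigenstep satisfies $\lambda_{n;m}=0$ whenever $M<m\le n$. I would prove the first by forward induction on $n$, with base case $\lambda_{0;m}=0$; the inductive step uses the interlacing inequality $\lambda_{n;m+1}\le\lambda_{n-1;m}$ to propagate zeros, combined with nonnegativity of outer eigensteps (which follows by chaining the endpoint inequality $\lambda_{n-1;M}\le\lambda_{n;M}$ forward from $\lambda_{0;M}=0$, together with the fact that interlacing forces each row to be nonincreasing). I would prove the second by backward induction on $n$, with base case $\lambda_{N;m}=\lambda_m=0$ for $m>M$; the inductive step uses the inner interlacing inequality $\lambda_{n-1;m}\le\lambda_{n;m}$ for $m\le n-1$ together with nonnegativity (chained back from $\lambda_{N;N}=\lambda_N\ge 0$ via $\lambda_{n-1;n-1}\ge\lambda_{n;n}$).

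Given the two lemmas, the bijection is bookkeeping. Starting from outer eigensteps $\{\{\lambda_{n;m}\}_{m=1}^M\}_{n=0}^N$, I would define inner sequences by $\{\lambda_{n;m}\}_{m=1}^n$; when $n\le M$ this truncates the row, discarding only zeros by the first lemma, and when $n>M$ this appends zeros beyond position $M$, which is consistent with the claimed correspondence. I would then verify that inner properties (i)--(iii) of Definition~\ref{definition.inner eigensteps} follow from outer properties (ii)--(iv) of Definition~\ref{definition.outer eigensteps}: property (iii) holds since the discarded or appended entries are zero; property (ii) is the restriction of outer interlacing to $m=1,\ldots,n-1$. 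The reverse direction zero-pads (when $n\le M$) or truncates (when $n>M$) and uses the second lemma symmetrically; outer property (i) is prescribed directly, and the new outer interlacing inequalities at $n=1$ reduce to $0\le\lambda_{1;1}$, which holds by nonnegativity.

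For the spectral claim, I would invoke the classical identity that $AA^*$ and $A^*A$ share the same nonzero eigenvalues with multiplicity, applied to $A=F_n\in\bbC^{M\times n}$: this forces the two nonincreasing spectra, of lengths $M$ and $n$, to be zero-padded versions of each other, matching the eigenstep correspondence established above. The main obstacle I anticipate is the bookkeeping around boundary positions $m=n$ and $m=M$ in the interlacing checks, which must be handled separately in the regimes $n<M$, $n=M$, and $n>M$; but in each boundary case the relevant inequality either coincides with one from the other definition or reduces to $0\le 0$ via the vanishing lemmas.
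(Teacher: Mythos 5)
The paper does not prove this theorem in-text---it states only that ``the proof is straightforward but tedious'' and defers to~\cite{FickusMP:11}---so there is no in-paper argument to compare against. Your proposal is correct. The two vanishing lemmas ($\lambda_{n;m}=0$ for $n<m\leq M$ for outer eigensteps; $\lambda_{n;m}=0$ for $M<m\leq n$ for inner) are precisely what makes the truncation/zero-padding correspondence preserve the trace sums, and your inductive arguments for them are sound: in each case nonnegativity is obtained by chaining the appropriate endpoint interlacing inequality ($\lambda_{n-1;M}\leq\lambda_{n;M}$ forward from $\lambda_{0;M}=0$, respectively $\lambda_{n;n}\leq\lambda_{n-1;n-1}$ backward from $\lambda_{N;N}=\lambda_N\geq0$), together with the column monotonicity $\lambda_{n;m+1}\leq\lambda_{n-1;m}\leq\lambda_{n;m}$ that interlacing forces. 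The remaining checks are, as you say, boundary bookkeeping in the regimes $n<M$, $n=M$, $n>M$, with each new inequality at $m=M$ or $m=n$ reducing to a trivial identity once the vanishing lemmas are in place; and the spectral claim is the standard fact that $AA^*$ and $A^*A$ share the same nonzero eigenvalues with multiplicity, applied to the $M\times n$ synthesis operator $F_n$. One minor imprecision: in the inner-to-outer direction you state that the $n=1$ outer interlacing ``reduces to $0\leq\lambda_{1;1}$,'' but it also requires $\lambda_{1;m}=0$ for $m\geq2$; this does hold by your zero-padding construction (since $m>n=1$), so the argument is fine, but a full write-up should say so explicitly.
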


The proof of Theorem~\ref{theorem.inner vs outer} is straightforward but tedious, and so we do not present it here; the interested reader can find it in~\cite{FickusMP:11}.  In the remainder of this paper, we exploit this equivalence to solve Problem~\ref{problem.main}.

%%%%%%%%%%%%%%%%%%%%%%%%%%%%%%%%%%%%%%%%%%%%%%%%%%%%%%%%%%%%%%%%
%%%%%%%%%%%%%%%%%%%%%%%%%%%%%%%%%%%%%%%%%%%%%%%%%%%%%%%%%%%%%%%%
% Section 3: Top Kill and the existence of eigensteps
%%%%%%%%%%%%%%%%%%%%%%%%%%%%%%%%%%%%%%%%%%%%%%%%%%%%%%%%%%%%%%%%
%%%%%%%%%%%%%%%%%%%%%%%%%%%%%%%%%%%%%%%%%%%%%%%%%%%%%%%%%%%%%%%%

\section{Top Kill and the existence of eigensteps}

As discussed in the previous section, the problem of constructing every Schur-Horn matrix boils down to solving Problem~\ref{problem.main}, which in light of~Theorem~\ref{theorem.necessity and sufficiency of eigensteps}, reduces to the problem of constructing every possible sequence of outer eigensteps (Definition~\ref{definition.outer eigensteps}).  Moreover, by Theorem~\ref{theorem.inner vs outer}, every sequence of outer eigensteps corresponds to a unique sequence of inner eigensteps (Definition~\ref{definition.inner eigensteps}).  We now note that if a sequence of inner eigensteps $\{\{\lambda_{n;m}\}_{m=1}^n\}_{n=1}^N$ exists, then $\set{\lambda_n}_{n=1}^{N}$ necessarily majorizes $\set{\mu_n}_{n=1}^{N}$. Indeed, letting $n=N$ in the trace property (iii) of Definition~\ref{definition.inner eigensteps} immediately gives one of the majorization conditions \eqref{equation.definition of majorization 2}; to obtain the remaining condition~\eqref{equation.definition of majorization 1} at a given $n=1,\dotsc,N-1$, note that the interlacing property (ii) gives $\lambda_{n;m}\leq\lambda_{N;m}=\lambda_m$ for all $m=1,\dotsc,n$, at which point (iii) implies
\begin{equation*}
\sum_{m=1}^{n}\mu_m
=\sum_{m=1}^{n}\lambda_{n;m}
\leq\sum_{m=1}^{n}\lambda_m.
\end{equation*}

In this section, we prove the converse result, namely that if $\set{\lambda_n}_{n=1}^{N}\succeq\set{\mu_n}_{n=1}^{N}$, then a corresponding sequence of inner eigensteps $\set{\set{\lambda_{n;m}}_{m=1}^n}_{n=1}^N$ exists.  The key idea is a new algorithm, dubbed \textit{Top Kill}, for transforming any sequence $\set{\lambda_{n;m}}_{m=1}^{n}$ that majorizes $\set{\mu_m}_{m=1}^{n}$ into a new, shorter sequence $\set{\lambda_{n;m}}_{m=1}^{n-1}$ that majorizes $\set{\mu_m}_{m=1}^{n-1}$ and also interlaces with $\set{\lambda_{n;m}}_{m=1}^{n}$.  We note that a similar idea is used to prove the Schur-Horn Theorem in~\cite{HornJ:85}; this section's contribution to the existing literature is a simple constructive proof to replace the nonconstructive, Intermediate-Value-Theorem-based existence proof of Lemma~4.3.28 of~\cite{HornJ:85}.  In the next section, these new proof techniques lead to a new result which shows how to systematically construct every valid sequence of inner eigensteps for a given $\set{\lambda_n}_{n=1}^{N}$ and $\set{\mu_n}_{n=1}^{N}$.  We now motivate Top Kill with an example:

\begin{example}
\label{example.top kill}
Let $N=3$, $\set{\lambda_1,\lambda_2,\lambda_3}=\set{\frac74,\frac34,\frac12}$ and $\set{\mu_1,\mu_2,\mu_3}=\set{1,1,1}$. Since this spectrum majorizes these lengths, we claim that there exists a corresponding sequence of inner eigensteps $\set{\set{\lambda_{n;m}}_{m=1}^n}_{n=1}^3$.  That is, recalling Definition~\ref{definition.inner eigensteps}, we claim that it is possible to find values $\set{\lambda_{1;1}}$ and $\set{\lambda_{2;1},\lambda_{2,2}}$ which satisfy the interlacing requirements (ii) that $\set{\lambda_{1;1}}\sqsubseteq\set{\lambda_{2;1},\lambda_{2,2}}\sqsubseteq\set{\frac74,\frac34,\frac12}$ as well as the trace requirements (iii) that $\lambda_{1;1}=1$ and $\lambda_{2;1}+\lambda_{2;2}=2$.  Indeed, every such sequence of eigensteps is given by the table:
\begin{equation}
\label{equation.top kill 1}
\begin{tabular}{p{1cm}p{1cm}p{1cm}l}
\ \,$n$&$1$&$2$&$3${\smallskip}\\ 
\hline\noalign{\smallskip}
$\lambda_{n;3}$	&	&		&$\frac12${\smallskip}\\
$\lambda_{n;2}$	&	&$x$	&$\frac34${\smallskip}\\
$\lambda_{n;1}$	&$1$&$2-x$	&$\frac74$\\
\end{tabular}
\end{equation}
where $x$ is required to satisfy
\begin{equation}
\label{equation.top kill 2}
\tfrac12\leq x\leq\tfrac34\leq2-x\leq\tfrac74,
\qquad
x\leq1\leq2-x.
\end{equation}
Clearly, any $x\in[\frac12,\frac34]$ will do.  However, when $N$ is large, the table analogous to~\eqref{equation.top kill 1} will contain many more variables, leading to a system of inequalities which is much larger and more complicated than~\eqref{equation.top kill 2}.  In such settings, it is not obvious how to construct even a single valid sequence of eigensteps.  As such, we consider this same simple example from a different perspective---one that leads to an eigenstep construction algorithm which is easily implementable regardless of the size of $N$.

The key idea is to view the task of constructing eigensteps as iteratively building a staircase in which the $n$th level is $\lambda_n$ units long.  For this example in particular, our goal is to build a three-step staircase where the bottom level has length $\frac74$, the second level has length $\frac34$, and the top level has length $\frac12$; the profile of such a staircase is outlined in black in each of the six subfigures of Figure~\ref{figure.top kill}.
\begin{figure}
\begin{center}
\includegraphics{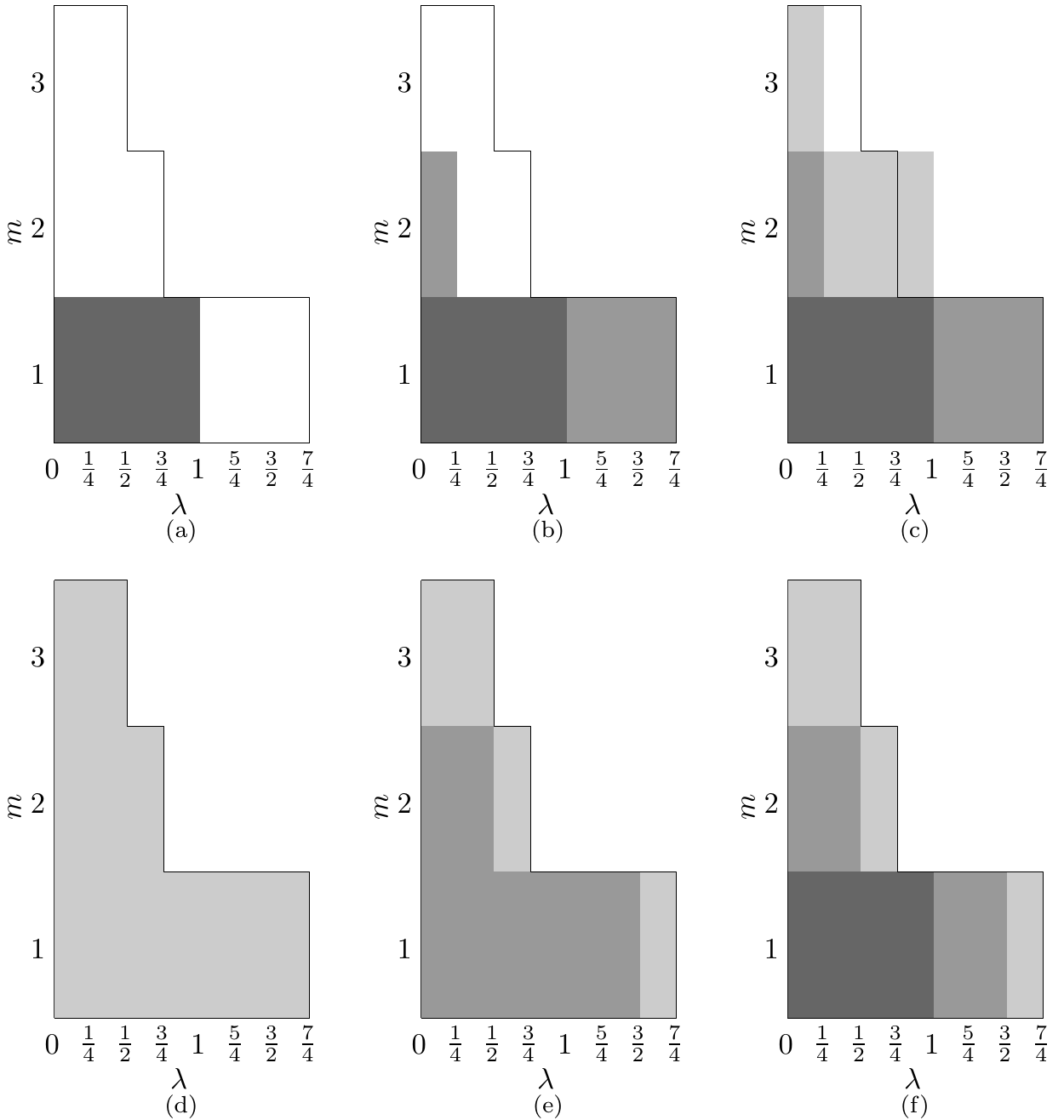}
\caption{\label{figure.top kill} Two attempts at iteratively building a sequence of inner eigensteps for $\set{\lambda_1,\lambda_2,\lambda_3}=\set{\frac74,\frac34,\frac12}$ and $\set{\mu_1,\mu_2,\mu_3}=\set{1,1,1}$.  As detailed in Example~\ref{example.top kill}, the first row represents a failed attempt in which we greedily complete the first level before focusing on those above it.  The failure arises from a lack of foresight: the second step does not build sufficient foundation for the third.  The second row represents a second attempt, one that is successful.  There, we begin with the final desired staircase and work backwards.  That is, we chip away at the three-level staircase (d) to produce a two-level one (e), and then chip away at it to produce a one-level one (f).  In each step, we remove as much as possible from the top level before turning our attention to the lower levels, subject to the interlacing constraints.  We refer to this algorithm for iteratively producing $\set{\lambda_{n-1;m}}_{m=1}^{n-1}$ from $\set{\lambda_{n;m}}_{m=1}^{n}$ as Top Kill.  Theorem~\ref{theorem.top kill} shows that Top Kill will always produce a valid sequence of eigensteps from any desired spectrum $\set{\lambda_n}_{n=1}^{N}$ that majorizes a given desired sequence of lengths $\set{\mu_n}_{n=1}^{N}$.}
\end{center}
\end{figure}
The benefit of visualizing eigensteps in this way is that the interlacing and trace conditions become intuitive staircase-building rules. Specifically, up until the $n$th step, we will have built a staircase whose levels are $\{\lambda_{n-1;m}\}_{m=1}^{n-1}$. To build on top of this staircase, we use $n$ blocks of height $1$ whose areas sum to $\mu_n$. Each of these $n$ new blocks is added to its corresponding level of the current staircase, and is required to rest entirely on top of what has been previously built. This requirement corresponds to the interlacing condition (ii) of Definition~\ref{definition.inner eigensteps}, while the trace condition (iii) corresponds to the fact that the block areas sum to $\mu_n$.

This intuition in mind, we now try to build such a staircase from the ground up.  In the first step (Figure~\ref{figure.top kill}(a)), we are required to place a single block of area $\mu_1=1$ on the first level.  The length of this first level is $\lambda_{1;1}=\mu_1$.  In the second step, we build up and out from this initial block, placing two new blocks---one on the first level and another on the second---whose total area is $\mu_2=1$.  The lengths $\lambda_{2;1}$ and $\lambda_{2;2}$ of the new first and second levels depends on how these two blocks are chosen.  In particular, choosing first and second level blocks of area $\frac34$ and $\frac14$, respectively, results in $\set{\lambda_{2;1},\lambda_{2;2}}=\set{\frac74,\frac14}$ (Figure~\ref{figure.top kill}(b)); this corresponds to a greedy pursuit of the final desired spectrum $\set{\frac74,\frac34,\frac12}$, fully completing the first level before turning our attention to the second.  The problem with this greedy approach is that it doesn't always work, as this example illustrates.  Indeed, in the third and final step, we build up and out from the staircase of Figure~\ref{figure.top kill}(b) by adding three new blocks---one each for the first, second and third levels---whose total area is $\mu_3=1$.  However, in order to maintain interlacing, the new top block must rest entirely on the existing second level, meaning that its length $\lambda_{3;3}\leq\lambda_{2;2}=\frac14$ cannot equal the desired value of $\frac12$.  That is, because of our poor choice in the second step, the ``best" we can now do is $\set{\lambda_{3;1},\lambda_{3;2},\lambda_{3;3}}=\set{\frac74,1,\frac14}$ (Figure~\ref{figure.top kill}(c)):
\begin{equation*}
\begin{tabular}{p{1cm}p{1cm}p{1cm}l}
\ \,$n$&$1$&$2$&$3${\smallskip}\\ 
\hline\noalign{\smallskip}
$\lambda_{n;3}$	&	&			&$\frac14${\smallskip}\\
$\lambda_{n;2}$	&	&$\frac14$	&$1${\smallskip}\\
$\lambda_{n;1}$	&$1$&$\frac74$	&$\frac74$\\
\end{tabular}
\end{equation*}
The reason this greedy approach fails is that it doesn't plan ahead.  Indeed, it treats the bottom levels of the staircase as the priority when, in fact, the opposite is true: the top levels are the priority since they require the most foresight.  In particular, for $\lambda_{3;3}$ to achieve its desired value of $\frac12$ in the third step, one must lay a suitable foundation in which $\lambda_{2;2}\geq\frac12$ in the second step.

In light of this realization, we make another attempt at building our staircase.  This time we begin with the final desired spectrum $\set{\lambda_{3;1},\lambda_{3;2},\lambda_{3;3}}=\set{\frac74,\frac34,\frac12}$ (Figure~\ref{figure.top kill}(d)) and work backwards.  From this perspective, our task is now to remove three blocks---the entirety of the top level, and portions of the first and second levels---whose total area is $\mu_3=1$.  Here, the interlacing requirement translates to only being permitted to remove portions of the staircase that were already exposed to the surface at the end of the previous step.  After lopping off the top level, which has area $\lambda_{3;3}=\frac12$, we need to decide how to chip away $\mu_1-\lambda_{3;3}=1-\frac12=\frac12$ units of area from the first and second levels, subject to this constraint.  At this point, we observe that in the step that follows, our first task will be to remove the remaining portion of the second level.  As such, it is to our advantage to remove as much of the second level as possible in the current step, and only then turn our attention to the lower levels.  That is, we follow Thomas Jefferson's adage, ``Never put off until tomorrow what you can do today."  We dub this approach Top Kill since it ``kills" off as much as possible from the top portions of the staircase.  For this example in particular, interlacing implies that we can at most remove a block of area $\frac14$ from the second level, leaving $\frac14$ units of area to be removed from the first; the resulting two-level staircase---the darker shade in Figure~\ref{figure.top kill}(e)---has levels of lengths $\set{\lambda_{2;1},\lambda_{2;2}}=\set{\frac32,\frac12}$.  In the second step, we then apply this same philosophy, removing the entire second level and a block of area $\mu_2-\lambda_{2;2}=1-\frac12=\frac12$ from the first, resulting in the one-level staircase (Figure~\ref{figure.top kill}(f)) in which $\set{\lambda_{1;1}}=1$.  That is, by working backwards we have produced a valid sequence of eigensteps:
\begin{equation*}
\begin{tabular}{p{1cm}p{1cm}p{1cm}l}
\ \,$n$&$1$&$2$&$3${\smallskip}\\ 
\hline\noalign{\smallskip}
$\lambda_{n;3}$	&	&			&$\frac14${\smallskip}\\
$\lambda_{n;2}$	&	&$\frac12$	&$1${\smallskip}\\
$\lambda_{n;1}$	&$1$&$\frac32$	&$\frac74$\\
\end{tabular}
\end{equation*}
\end{example}

The preceding example illustrated a systematic ``Top Kill" approach for building eigensteps; we now express these ideas more rigorously.  As can be seen in the bottom row of Figure~\ref{figure.top kill}, Top Kill generally picks $\lambda_{n-1;m}:=\lambda_{n;m+1}$ for the larger $m$'s.  Top Kill also picks $\lambda_{n-1;m}:=\lambda_{n;m}$ for the smaller $m$'s. The level that separates the larger $m$'s from the smaller $m$'s is the lowest level from which a nontrivial area is removed.  For this level, say level $k$, we have $\lambda_{n;k+1}<\mu_n\leq\lambda_{n;k}$.  In the levels above $k$, we have already removed a total of $\lambda_{n;k+1}$ units of area, leaving $\mu_n-\lambda_{n;k+1}$ to be chipped away from $\lambda_{n;k}$, yielding $\lambda_{n-1;k}:=\lambda_{n;k}-(\mu_n-\lambda_{n;k+1})$.  The following theorem confirms that Top Kill always produces eigensteps whenever it is possible to do so:

\begin{theorem} \label{theorem.top kill}
Suppose $\{\lambda_{n;m}\}_{m=1}^{n}\succeq\{\mu_{m}\}_{m=1}^{n}$, and define $\{\lambda_{n-1;m}\}_{m=1}^{n-1}$ according to Top Kill, that is, pick any $k$ such that $\lambda_{n;k+1}\leq\mu_{n}\leq\lambda_{n;k}$, and for each $m=1,\ldots,n-1$, define:
\begin{equation} \label{eq.top kill}
\lambda_{n-1;m}
:=\left\{\begin{array}{ll}\lambda_{n;m},&1\leq m\leq k-1,\\
\lambda_{n;k}+\lambda_{n;k+1}-\mu_{n},&m=k,\\
\lambda_{n;m+1},&k+1\leq m\leq n-1.
\end{array}\right.
\end{equation}
Then $\{\lambda_{n-1;m}\}_{m=1}^{n-1}\sqsubseteq\{\lambda_{n;m}\}_{m=1}^{n}$ and $\{\lambda_{n-1;m}\}_{m=1}^{n-1}\succeq\{\mu_m\}_{m=1}^{n-1}$.

Furthermore, given any nonnegative nonincreasing sequences $\{\lambda_{n}\}_{n=1}^N$ and $\{\mu_{n}\}_{n=1}^N$ such that $\{\lambda_{n}\}_{n=1}^N\succeq\{\mu_{n}\}_{n=1}^N$, define $\lambda_{N;m}:=\lambda_m$ for every $m=1,\ldots,N$, and for each $n=N,\ldots,2$, consecutively define $\{\lambda_{n-1;m}\}_{m=1}^{n-1}$ according to Top Kill. Then $\{\{\lambda_{n;m}\}_{m=1}^n\}_{n=1}^N$ are inner eigensteps.
\end{theorem}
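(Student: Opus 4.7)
The statement has two parts that I would handle in sequence: first the single-step Top-Kill claim, and then the iterative claim, which follows by a straightforward downward induction from the first. Before anything else, I would briefly observe that the hypothesis $\{\lambda_{n;m}\}_{m=1}^n\succeq\{\mu_m\}_{m=1}^n$ guarantees that a valid index $k$ exists: majorization gives $\lambda_{n;1}\geq\mu_1\geq\mu_n$, and combining the trace equality with the partial-sum inequality at $j=n-1$ yields $\lambda_{n;n}\leq\mu_n$, so one may take $k$ to be the largest index with $\lambda_{n;k}\geq\mu_n$, noting the convention $\lambda_{n;n+1}=0$.

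For the single-step claim I would verify the three required properties in turn. The interlacing relation $\{\lambda_{n-1;m}\}_{m=1}^{n-1}\sqsubseteq\{\lambda_{n;m}\}_{m=1}^{n}$ reduces by definition~\eqref{eq.top kill} to a three-case check indexed by the regions $m<k$, $m=k$, $m>k$; the only nontrivial inequality is at $m=k$, where $\lambda_{n;k+1}\leq\lambda_{n;k}+\lambda_{n;k+1}-\mu_n\leq\lambda_{n;k}$ is precisely the chosen defining condition on $k$. A similarly short case analysis (together with nonnegativity of $\lambda_{n;k+1}$ and $\mu_n\leq\lambda_{n;k}$) shows that the new sequence is itself nonnegative and nonincreasing, so it is a valid candidate for a majorization target.

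The heart of the argument is the majorization $\{\lambda_{n-1;m}\}_{m=1}^{n-1}\succeq\{\mu_m\}_{m=1}^{n-1}$. The trace equality follows from the telescoping identity
\begin{equation*}
\sum_{m=1}^{n-1}\lambda_{n-1;m}
=\sum_{m=1}^{n}\lambda_{n;m}-\mu_n
=\sum_{m=1}^{n}\mu_m-\mu_n
=\sum_{m=1}^{n-1}\mu_m,
\end{equation*}
which I would obtain by substituting \eqref{eq.top kill} and recognizing that the $m=k$ term subtracts exactly $\mu_n$ from what would otherwise be $\sum_{m=1}^{n}\lambda_{n;m}$. For the partial sums $\sum_{m=1}^{j}\lambda_{n-1;m}\geq\sum_{m=1}^{j}\mu_m$ with $1\leq j\leq n-1$, the same computation gives $\sum_{m=1}^{j}\lambda_{n-1;m}=\sum_{m=1}^{j}\lambda_{n;m}$ when $j\leq k-1$, and $\sum_{m=1}^{j}\lambda_{n-1;m}=\sum_{m=1}^{j+1}\lambda_{n;m}-\mu_n$ when $j\geq k$. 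In the first regime the claim is immediate from the hypothesis. In the second regime the hypothesis gives $\sum_{m=1}^{j+1}\lambda_{n;m}\geq\sum_{m=1}^{j+1}\mu_m$, and the remaining gap $\mu_{j+1}-\mu_n\geq 0$ is supplied by monotonicity of $\{\mu_m\}$ together with $j+1\leq n$. This final absorption step is the one place the proof really uses that $\mu$ is nonincreasing, so I would flag it as the main (if mild) obstacle.

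Finally, for the iterative statement I would induct downward on $n$ from $N$ to $2$, taking as inductive hypothesis that $\{\lambda_{n;m}\}_{m=1}^n\succeq\{\mu_m\}_{m=1}^n$. The base case $n=N$ is the assumption. The inductive step is the single-step claim just proved, which simultaneously delivers the interlacing needed for property~(ii) of Definition~\ref{definition.inner eigensteps} and the majorization needed to feed Top Kill into the next round; the trace property~(iii) at each stage is already part of that majorization, and property~(i) is the base case. Thus $\{\{\lambda_{n;m}\}_{m=1}^n\}_{n=1}^N$ is a sequence of inner eigensteps.
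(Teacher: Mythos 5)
Your proof is correct and follows essentially the same route as the paper's: verify the existence of a valid $k$ from the majorization hypothesis, check interlacing by a three-case analysis where only $m=k$ is nontrivial, establish majorization by splitting the partial sums at $j\leq k-1$ (where $\sum\alpha=\sum\beta$) versus $j\geq k$ (where $\sum_{m\leq j}\alpha_m=\sum_{m\leq j+1}\beta_m-\mu_n$ and the slack is absorbed by $\mu_{j+1}\geq\mu_n$), and then conclude by downward induction. The small additions you make---explicitly noting that the new sequence is nonnegative and nonincreasing, and separating the trace equality from the partial-sum inequalities---are harmless refinements rather than a different argument.
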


\begin{proof}
For the sake of notational simplicity, we denote $\{\alpha_m\}_{m=1}^{n-1}:=\{\lambda_{n-1;m}\}_{m=1}^{n-1}$ and $\{\beta_{m}\}_{m=1}^{n}:=\{\lambda_{n;m}\}_{m=1}^{n}$.  Since $\set{\beta_m}_{m=1}^{n}\succeq\set{\mu_m}_{m=1}^{n}$, we necessarily have that $\beta_n\leq\mu_n\leq\mu_1\leq\beta_1$, and so there exists $k=1,\dotsc,n-1$ such that $\beta_{k+1}\leq\mu_{n}\leq\beta_k$.  Though this $k$ may not be unique when subsequent $\beta_m$'s are equal, a quick inspection reveals that any appropriate choice of $k$ will yield the same $\alpha_{m}$'s, and so Top Kill is well-defined.  To prove $\{\alpha_m\}_{m=1}^{n-1}\sqsubseteq\{\beta_m\}_{m=1}^{n}$, we need to show that
\begin{equation}
\label{eq.interlacing nts}
\beta_{m+1}\leq\alpha_m\leq\beta_m
\end{equation}
for every $m=1,\ldots,n-1$. If $1\leq m\leq k-1$, then $\alpha_m:=\beta_m$, and so the right-hand inequality of \eqref{eq.interlacing nts} holds with equality, at which point the left-hand inequality is immediate.  Similarly, if $k+1\leq m\leq n-1$, then $\alpha_m:=\beta_{m+1}$, and so \eqref{eq.interlacing nts} holds with equality on the left-hand side. Lastly if $m=k$, then $\alpha_k:=\beta_k+\beta_{k+1}-\mu_{n}$, and our assumption that $\beta_{k+1}\leq\mu_{n}\leq\beta_k$ gives \eqref{eq.interlacing nts} in this case:
\begin{equation*}
\beta_{k+1}\leq\beta_k+\beta_{k+1}-\mu_{n}\leq\beta_k.
\end{equation*}
Thus, Top Kill produces $\{\alpha_m\}_{m=1}^{n-1}$ such that $\{\alpha_m\}_{m=1}^{n-1}\sqsubseteq\{\beta_m\}_{m=1}^{n}$.  We next show that  $\{\alpha_m\}_{m=1}^{n-1}\succeq\{\mu_m\}_{m=1}^{n-1}$.
If $j\leq k-1$, then since $\{\beta_m\}_{m=1}^{n}\succeq\{\mu_m\}_{m=1}^{n}$, we have
\begin{equation*}
\sum_{m=1}^j\alpha_m
=\sum_{m=1}^j\beta_m
\geq\sum_{m=1}^j\mu_m.
\end{equation*}
On the other hand, if $j\geq k$, we have
\begin{equation}
\label{eq.majorization proof 1}
\sum_{m=1}^j\alpha_m
=\sum_{m=1}^{k-1}\beta_m+(\beta_k+\beta_{k+1}-\mu_{n})+\sum_{m=k+1}^j\beta_{m+1}
=\sum_{m=1}^{j+1}\beta_m-\mu_{n},
\end{equation}
with the understanding that a sum over an empty set of indices is zero. We continue \eqref{eq.majorization proof 1} by using the facts that $\{\beta_m\}_{m=1}^{n}\succeq\{\mu_m\}_{m=1}^{n}$ and $\mu_{j+1}\geq\mu_{n}$:
\begin{equation}
\label{eq.majorization proof 2}
\sum_{m=1}^j\alpha_m
=\sum_{m=1}^{j+1}\beta_m-\mu_{n}
\geq\sum_{m=1}^{j+1}\mu_m-\mu_{n}
\geq\sum_{m=1}^{j}\mu_{m}.
\end{equation}
Note that when $j=n$, the inequalities in \eqref{eq.majorization proof 2} become equalities, giving the final trace condition.

For the final conclusion, note that one application of Top Kill transforms a sequence $\set{\lambda_{n;m}}_{m=1}^{n}$ that majorizes $\set{\mu_m}_{m=1}^{n}$ into a shorter sequence $\set{\lambda_{n-1;m}}_{m=1}^{n-1}$ that interlaces with $\set{\lambda_{n;m}}_{m=1}^{n}$ and majorizes $\set{\mu_m}_{m=1}^{n-1}$.  As such, one may indeed start with $\lambda_{N;m}:=\lambda_m$ and apply Top Kill $N-1$ times to produce a sequence $\set{\set{\lambda_{n;m}}_{m=1}^{n}}_{n=1}^{N}$ that immediately satisfies Definition~\ref{definition.inner eigensteps}.
\qquad
\end{proof}

%%%%%%%%%%%%%%%%%%%%%%%%%%%%%%%%%%%%%%%%%%%%%%%%%%%%%%%%%%%%%%%%
%%%%%%%%%%%%%%%%%%%%%%%%%%%%%%%%%%%%%%%%%%%%%%%%%%%%%%%%%%%%%%%%
% Section 4: Parametrizing eigensteps
%%%%%%%%%%%%%%%%%%%%%%%%%%%%%%%%%%%%%%%%%%%%%%%%%%%%%%%%%%%%%%%%
%%%%%%%%%%%%%%%%%%%%%%%%%%%%%%%%%%%%%%%%%%%%%%%%%%%%%%%%%%%%%%%%

\section{Parametrizing eigensteps}

In the previous section, we discussed Top Kill, an algorithm designed to construct a sequence of inner eigensteps from given nonnegative nonincreasing sequences $\set{\lambda_n}_{n=1}^{N}$ and $\set{\mu_n}_{n=1}^{N}$.  In this section, we use the intuition underlying Top Kill to find a systematic method for producing all such eigensteps.  To be precise, treating the values $\set{\set{\lambda_{n;m}}_{m=1}^{n}}_{n=1}^{N-1}$ as independent variables, it is not difficult to show that the set of all inner eigensteps for a given $\set{\lambda_n}_{n=1}^{N}$ and $\set{\mu_n}_{n=1}^{N}$ form a convex polytope in $\bbR^{N(N-1)/2}$.  Our goal is to find a useful, implementable parametrization of this polytope.

We begin by noting that this polytope is nonempty precisely when $\set{\lambda_n}_{n=1}^{N}$ majorizes $\set{\mu_n}_{n=1}^{N}$.  Indeed, as noted at the beginning of the previous section, if such a sequence of eigensteps exists, then we necessarily have that $\set{\lambda_n}_{n=1}^{N}\succeq\set{\mu_n}_{n=1}^{N}$.  Conversely, if $\set{\lambda_n}_{n=1}^{N}\succeq\set{\mu_n}_{n=1}^{N}$, then Theorem~\ref{theorem.top kill} states that Top Kill will produce a valid sequence of eigensteps from $\set{\lambda_n}_{n=1}^{N}$ and $\set{\mu_n}_{n=1}^{N}$.  Note this implies that for a given $\set{\lambda_n}_{n=1}^{N}$ and $\set{\mu_n}_{n=1}^{N}$, if any given strategy for building eigensteps is successful, then Top Kill will also succeed.  In this sense, Top Kill is an optimal strategy.  However, Top Kill alone will not suffice to parametrize our polytope, since for a given feasible $\set{\lambda_n}_{n=1}^{N}$ and $\set{\mu_n}_{n=1}^{N}$, it only produces a single sequence of eigensteps when, in fact, there may be infinitely many such sequences.  In the work that follows, we view these non-Top-Kill-produced eigensteps as the result of applying suboptimal generalizations of Top Kill to $\set{\lambda_n}_{n=1}^{N}$ and $\set{\mu_n}_{n=1}^{N}$.

For example, if $\set{\lambda_1,\lambda_2,\lambda_3,\lambda_4,\lambda_5}=\set{\frac53,\frac53,\frac53,0,0}$ and $\mu_n=1$ for all $n=1,\dotsc,5$, every sequence of inner eigensteps corresponds to a choice of the unknown values in~\eqref{equation.5 in 3 inner 0} which satisfies the interlacing and trace conditions (ii) and (iii) of Definition~\ref{definition.inner eigensteps}.  There are $10$ unknows in~\eqref{equation.5 in 3 inner 0}, and the set of all such eigensteps is a convex polytope in $\bbR^{10}$.  Though this dimension can be reduced by exploiting the interlacing and trace conditions---the $10$ unknowns in~\eqref{equation.5 in 3 inner 0} can be reduced to the two unknowns in~\eqref{equation.5 in 3 inner 2}---this approach to constructing all eigensteps nevertheless requires one to simplify large systems of coupled inequalities, such as~\eqref{equation.5 in 3 inner 3}.

We suggest a different method for parametrizing this polytope: to systematically pick the values $\set{\set{\lambda_{n;m}}_{m=1}^{n}}_{n=1}^{4}$ one at a time.  Top Kill is one way to do this: working from the top levels down, we chip away $\mu_5=1$ units of area from $\set{\lambda_{5;m}}_{m=1}^{5}$ to successively produce $\lambda_{4;4}=0$, $\lambda_{4;3}=\frac23$, $\lambda_{4;2}=\frac53$ and $\lambda_{4;1}=\frac53$; we then repeat this process to transform $\set{\lambda_{4;m}}_{m=1}^{4}$ into $\set{\lambda_{3;m}}_{m=1}^{3}$, and so on; the specific values can be obtained by letting $(x,y)=(0,\frac13)$ in~\eqref{equation.5 in 3 inner 2}.  We seek to generalize Top Kill to find \textit{all} ways of picking the $\lambda_{n;m}$'s one at a time.  As in Top Kill, we work backwards: we first find all possibilities for $\lambda_{4;4}$, then the possibilities for $\lambda_{4;3}$ in terms of our choice of $\lambda_{4;4}$, then the possibilities for $\lambda_{4;2}$ in terms of our choices of $\lambda_{4;4}$ and $\lambda_{4;3}$, and so on.  That is, we iteratively parametrize our convex polytope in the following order:
\begin{equation*}
\lambda_{4;4},\quad
\lambda_{4;3},\quad 
\lambda_{4;2},\quad
\lambda_{4;1},\quad 
\lambda_{3;3},\quad
\lambda_{3;2},\quad 
\lambda_{3;1},\quad 
\lambda_{2;2},\quad
\lambda_{2;1},\quad
\lambda_{1;1}.
\end{equation*}

More generally, for any $\set{\lambda_n}_{n=1}^{N}$ and $\set{\mu_n}_{n=1}^{N}$ such that $\set{\lambda_n}_{n=1}^{N}\succeq\set{\mu_n}_{n=1}^{N}$ we construct every possible sequence of eigensteps $\set{\set{\lambda_{n;m}}_{m=1}^{n}}_{n=1}^{N}$ by finding all possibilities for any given $\lambda_{n-1;k}$ in terms of $\lambda_{n';m}$ where either $n'>n-1$ or $n'=n-1$ and $m>k$.  Certainly, any permissible choice for $\lambda_{n-1;k}$ must satisfy the interlacing criteria (ii) of Definition~\ref{definition.inner eigensteps}, and so we have bounds $\lambda_{n;k+1}\leq\lambda_{n-1;k}\leq\lambda_{n;k}$. Other necessary bounds arise from the majorization conditions. Indeed, in order to have both $\{\lambda_{n;m}\}_{m=1}^{n}\succeq\{\mu_{m}\}_{m=1}^{n}$ and $\{\lambda_{n-1;m}\}_{m=1}^{n-1}\succeq\{\mu_m\}_{m=1}^{n-1}$ we need
\begin{equation}
\label{eq.intuition 1}
\mu_n
=\sum_{m=1}^n\mu_m-\sum_{m=1}^{n-1}\mu_m
=\sum_{m=1}^n\lambda_{n;m}-\sum_{m=1}^{n-1}\lambda_{n-1;m},
\end{equation}
and so we may view $\mu_n$ as the total change between the eigenstep spectra.  Having already selected $\lambda_{n-1;n-1},\dots,\lambda_{n-1;k+1}$, we've already imposed a certain amount of change between the spectra, and so we are limited in how much we can change the $k$th eigenvalue. Continuing \eqref{eq.intuition 1}, this fact can be expressed as
\begin{equation}
\label{eq.intuition 2}
\mu_n
=\lambda_{n;n}+\sum_{m=1}^{n-1}(\lambda_{n;m}-\lambda_{n-1;m})
\geq\lambda_{n;n}+\sum_{m=k}^{n-1}(\lambda_{n;m}-\lambda_{n-1;m}),
\end{equation}
where the inequality follows from the fact that the summands $\lambda_{n;m}-\lambda_{n-1;m}$ are nonnegative if $\{\lambda_{n-1;m}\}_{m=1}^{n-1}$ is to be chosen so that $\{\lambda_{n-1;m}\}_{m=1}^{n-1}\sqsubseteq\{\lambda_{n;m}\}_{m=1}^{n}$. Rearranging \eqref{eq.intuition 2} then gives a second lower bound on $\lambda_{n-1;k}$ to go along with our previously mentioned requirement that $\lambda_{n-1;k}\geq\lambda_{n;k+1}$:
\begin{equation}
\label{eq.intuition 3}
\lambda_{n-1;k}\geq\sum_{m=k}^{n}\lambda_{n;m}-\sum_{m=k+1}^{n-1}\lambda_{n-1;m}-\mu_n.
\end{equation}

We next apply the intuition behind Top Kill to obtain other upper bounds on $\lambda_{n-1;k}$ to go along with our previously mentioned requirement that $\lambda_{n-1;k}\leq\lambda_{n;k}$.  We caution that what follows is not a rigorous argument for the remaining upper bound on $\lambda_{n-1;k}$, but rather an informal derivation of this bound's expression; the legitimacy of this derivation is formally confirmed in the proof of Theorem~\ref{theorem.parametrize eigensteps}.  Recall that at this point in the narrative, we have already selected $\set{\lambda_{n-1;m}}_{m=k+1}^{n-1}$ and are attempting to find all possible choices $\lambda_{n-1;k}$ that will allow the remaining values $\set{\lambda_{n-1;m}}_{m=1}^{k-1}$ to be chosen in such a way that:
\begin{equation}
\label{eq.intuition 3.5}
\set{\lambda_{n-1;m}}_{m=1}^{n-1}\sqsubseteq\set{\lambda_{n;m}}_{m=1}^{n},
\qquad 
\set{\lambda_{n-1;m}}_{m=1}^{n-1}\succeq\set{\mu_{m}}_{m=1}^{n-1}.
\end{equation}
To do this, we recall our staircase-building intuition from the previous section: if it is possible to build a given staircase, then one way to do this is to assign maximal priority to the highest levels, as these are the most difficult to build.  As such, for a given choice of $\lambda_{n-1;k}$, if it is possible to choose $\set{\lambda_{n-1;m}}_{m=1}^{k-1}$ in such a way that \eqref{eq.intuition 3.5} holds, then it is reasonable to expect that one way of doing this is to pick $\lambda_{n-1;k-1}$ by chipping away as much as possible from $\lambda_{n;k-1}$, then pick $\lambda_{n-1;k-2}$ by chipping away as much as possible from $\lambda_{n;k-2}$, and so on.  That is, we pick some arbitrary value $\lambda_{n-1;k}$, and to test its legitimacy, we apply the Top Kill algorithm to construct the remaining undetermined values $\set{\lambda_{n-1;m}}_{m=1}^{k-1}$; we then check whether or not $\set{\lambda_{n-1;m}}_{m=1}^{n-1}\succeq\set{\mu_{m}}_{m=1}^{n-1}$.

To be precise, note that prior to applying Top Kill, the remaining spectrum is $\{\lambda_{n;m}\}_{m=1}^{k-1}$, and that the total amount we will chip away from this spectrum is
\begin{equation}
\label{eq.intuition 4}
\mu_n-\bigg(\lambda_{n;n}+\sum_{m=k}^{n-1}(\lambda_{n;m}-\lambda_{n-1;m})\bigg).
\end{equation}
To ensure that our choice of $\lambda_{n-1;k-1}$ satisfies $\lambda_{n-1;k-1}\geq\lambda_{n;k}$, we artificially reintroduce $\lambda_{n;k}$ to both~\eqref{eq.intuition 4} and the remaining spectrum $\set{\lambda_{n;m}}_{m=1}^{k-1}$ before applying Top Kill.  That is, we apply Top Kill to $\{\beta_m\}_{m=1}^n:=\{\lambda_{n;m}\}_{m=1}^{k}\cup\{0\}_{m=k+1}^n$, where
\begin{equation}
\label{eq.intuition 5}
\mu
:=\mu_n-\bigg(\lambda_{n;n}+\sum_{m=k}^{n-1}(\lambda_{n;m}-\lambda_{n-1;m})\bigg)+\lambda_{n;k}
=\mu_n-\sum_{m=k+1}^{n}\lambda_{n;m}+\sum_{m=k}^{n-1}\lambda_{n-1;m}.
\end{equation}
Specifically in light of Theorem~\ref{theorem.top kill}, in order to optimally subtract $\mu$ units of area from $\{\beta_m\}_{m=1}^n$, we first pick $j$ such that $\beta_{j+1}\leq\mu\leq\beta_j$. We then use \eqref{eq.top kill} to produce a zero-padded version of the remaining new spectrum $\{\lambda_{n-1;m}\}_{m=1}^{k-1}\cup\{0\}_{m=k}^n$:
\begin{equation*}
\lambda_{n-1;m}=\left\{\begin{array}{ll}\lambda_{n;m},&1\leq m\leq j-1,\\\displaystyle{\lambda_{n;j}+\lambda_{n;j+1}-\mu_n+\sum_{m'=k+1}^n\lambda_{n;m'}-\sum_{m'=k}^{n-1}\lambda_{n-1;m'},}&m=j\\\lambda_{n;m+1},&j+1\leq m\leq k-1.\end{array}\right.
\end{equation*}
Picking ${l}$ such that $j+1\leq{l}\leq k$, we now sum the above values of $\lambda_{n-1;m}$ to obtain
\begin{align}
\nonumber
\sum_{m=1}^{{l}-1}\lambda_{n-1;m}
&=\sum_{m=1}^{j-1}\lambda_{n-1;m}+\lambda_{n-1;j}+\sum_{m=j+1}^{{l}-1}\lambda_{n-1;m}\\
\label{eq.intuition 5.5}
&=\sum_{m=1}^{l}\lambda_{n;m}-\mu_n+\sum_{m=k+1}^n\lambda_{n;m}-\sum_{m=k}^{n-1}\lambda_{n-1;m}.
\end{align}
Adding $\displaystyle\sum_{m=1}^n\mu_m-\sum_{m=1}^n\lambda_{n;m}=0$ to the right-hand side of \eqref{eq.intuition 5.5} then yields
\begin{align}
\nonumber
\sum_{m=1}^{{l}-1}\lambda_{n-1;m}
&=\sum_{m=1}^{l}\lambda_{n;m}-\mu_n+\sum_{m=k+1}^n\lambda_{n;m}-\sum_{m=k}^{n-1}\lambda_{n-1;m}+\sum_{m=1}^n\mu_m-\sum_{m=1}^n\lambda_{n;m}\\
\label{eq.intuition 6}
&=\sum_{m=1}^{n-1}\mu_m-\sum_{m={l}+1}^k\lambda_{n;m}-\sum_{m=k}^{n-1}\lambda_{n-1;m}.
\end{align}
Now, in order for $\{\lambda_{n-1;m}\}_{m=1}^{n-1}\succeq\{\mu_m\}_{m=1}^{n-1}$ as desired, \eqref{eq.intuition 6} must satisfy
\begin{equation}
\label{eq.intuition 6.5}
\sum_{m=1}^{{l}-1}\mu_m
\leq\sum_{m=1}^{{l}-1}\lambda_{n-1;m}
=\sum_{m=1}^{n-1}\mu_m-\sum_{m={l}+1}^k\lambda_{n;m}-\sum_{m=k}^{n-1}\lambda_{n-1;m}.
\end{equation}
Solving for $\lambda_{n-1;k}$ in \eqref{eq.intuition 6.5} then gives
\begin{equation}
\label{eq.intuition 7}
\lambda_{n-1;k}\leq\sum_{m={l}}^{n-1}\mu_m-\sum_{m={l}+1}^k\lambda_{n;m}-\sum_{m=k+1}^{n-1}\lambda_{n-1;m}.
\end{equation}
Note that, according to how we derived it, \eqref{eq.intuition 7} is valid when $j+1\leq{l}\leq k$. As established in the following theorem, this bound actually holds when ${l}=1,\ldots,k$. Overall, the interlacing conditions, \eqref{eq.intuition 3}, and \eqref{eq.intuition 7} are precisely the bounds that we verify in the following result:

\begin{theorem}
\label{theorem.parametrize eigensteps}
Suppose $\{\lambda_{n;m}\}_{m=1}^{n} \succeq \{\mu_{m}\}_{m=1}^{n}$.   Then $\{\lambda_{n-1;m}\}_{m=1}^{n-1} \succeq \{\mu_{m}\}_{m=1}^{n-1}$ and $\{\lambda_{n-1;m}\}_{m=1}^{n-1}\sqsubseteq\{\lambda_{n;m}\}_{m=1}^{n}$ if and only if $\lambda_{n-1;k}\in[A_{n-1;k},B_{n-1;k}]$ for every $k=1,\ldots,n-1$, where
\begin{align}
\label{eq.lower bound}
A_{n-1;k}&:=\max \bigg\{ \lambda_{n;k+1}, \sum_{m=k}^{n} \lambda_{n;m} - \sum_{m=k+1}^{n-1} \lambda_{n-1;m} - \mu_n \bigg\},\\
\label{eq.upper bound}
B_{n-1;k}&:=\min \bigg\{ \lambda_{n;k}, \min_{{l}=1,\dots,k} \bigg\{\sum_{m={l}}^{n-1} \mu_m - \sum_{m={l}+1}^{k} \lambda_{n;m} - \sum_{m=k+1}^{n-1} \lambda_{n-1;m} \bigg\}\bigg\}.
\end{align}
Here, we use the convention that sums over empty sets of indices are zero.  Moreover, suppose $\lambda_{n-1;n-1},\dots,\lambda_{n-1;k+1}$ are consecutively chosen to satisy these bounds.
Then $A_{n-1;k}\leq B_{n-1;k}$, and so $\lambda_{n-1;k}$ can also be chosen from such an interval.
\end{theorem}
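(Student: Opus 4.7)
The plan is to prove the iff by separately handling necessity and sufficiency, then address the feasibility claim $A_{n-1;k}\leq B_{n-1;k}$ by a case analysis on the four pairings of a $\max$-term from $A_{n-1;k}$ against a $\min$-term from $B_{n-1;k}$.

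For necessity, assume $\{\lambda_{n-1;m}\}_{m=1}^{n-1}\sqsubseteq\{\lambda_{n;m}\}_{m=1}^{n}$ and $\{\lambda_{n-1;m}\}_{m=1}^{n-1}\succeq\{\mu_m\}_{m=1}^{n-1}$. The interlacing immediately yields $\lambda_{n;k+1}\leq\lambda_{n-1;k}\leq\lambda_{n;k}$, covering the first terms of the bounds. For the second term of $A_{n-1;k}$, I combine the trace equalities $\sum_{m=1}^{n}\lambda_{n;m}=\sum_{m=1}^{n}\mu_m$ and $\sum_{m=1}^{n-1}\lambda_{n-1;m}=\sum_{m=1}^{n-1}\mu_m$ with the interlacing inequality $\lambda_{n-1;m}\leq\lambda_{n;m}$ summed over $m=1,\dots,k-1$. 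For the second term of $B_{n-1;k}$, I combine the partial majorization $\sum_{m=1}^{l-1}\lambda_{n-1;m}\geq\sum_{m=1}^{l-1}\mu_m$ with the trace equality and the interlacing $\lambda_{n-1;m-1}\geq\lambda_{n;m}$ summed over $m=l+1,\dots,k$.

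For sufficiency, the interlacing of $\{\lambda_{n-1;m}\}_{m=1}^{n-1}$ with $\{\lambda_{n;m}\}_{m=1}^{n}$ follows from the first terms of $A_{n-1;k}$ and $B_{n-1;k}$. Specializing to $k=1$, the hypothesis $\sum_{m=1}^{n}\lambda_{n;m}=\sum_{m=1}^{n}\mu_m$ shows that the second terms of $A_{n-1;1}$ and $B_{n-1;1}$ coincide (noting that $\sum_{m=l+1}^{k}\lambda_{n;m}$ is empty when $l=k=1$), so $\lambda_{n-1;1}$ is pinned to their common value, giving the trace equality for $\{\lambda_{n-1;m}\}_{m=1}^{n-1}$. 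The remaining strict majorization inequalities follow by setting $l=k$ in the second term of $B_{n-1;k}$ (making $\sum_{m=l+1}^{k}\lambda_{n;m}$ empty) to obtain $\sum_{m=k}^{n-1}\lambda_{n-1;m}\leq\sum_{m=k}^{n-1}\mu_m$, which combined with the trace equality yields $\sum_{m=1}^{k-1}\lambda_{n-1;m}\geq\sum_{m=1}^{k-1}\mu_m$.

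For feasibility, I verify $A_{n-1;k}\leq B_{n-1;k}$ by induction on $k$ descending from $n-1$, checking all four $\max$-vs-$\min$ pairings. The pairing $\lambda_{n;k+1}\leq\lambda_{n;k}$ is trivial. Pairing the two second terms simplifies algebraically (using $\sum_{m=1}^{n}\lambda_{n;m}=\sum_{m=1}^{n}\mu_m$) to $\sum_{m=1}^{l-1}\mu_m\leq\sum_{m=1}^{l-1}\lambda_{n;m}+(\lambda_{n;l}-\lambda_{n;k})$, which holds by the hypothesis $\{\lambda_{n;m}\}_{m=1}^{n}\succeq\{\mu_m\}_{m=1}^{n}$ and the monotonicity $\lambda_{n;l}\geq\lambda_{n;k}$ for $l\leq k$. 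The two cross-pairings are precisely the bounds already enforced on $\lambda_{n-1;k+1}$ by the inductive hypothesis: the second $A$-term versus $\lambda_{n;k}$ reduces to $\sum_{m=k+1}^{n}\lambda_{n;m}-\sum_{m=k+1}^{n-1}\lambda_{n-1;m}\leq\mu_n$, which is the lower bound $\lambda_{n-1;k+1}\geq A_{n-1;k+1}$; and $\lambda_{n;k+1}$ versus the second $B$-term is exactly the upper bound $\lambda_{n-1;k+1}\leq B_{n-1;k+1}$ at parameter $l$. In the base case $k=n-1$, the sums over $\lambda_{n-1;m}$ are empty and both cross-pairings reduce to direct consequences of $\{\lambda_{n;m}\}_{m=1}^{n}\succeq\{\mu_m\}_{m=1}^{n}$ (in particular $\lambda_{n;n}\leq\mu_n$). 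The main obstacle is the algebraic bookkeeping for the cross-pairings, namely identifying which previously established bound corresponds to which pairing; once this matching is set up, each step is short algebra.
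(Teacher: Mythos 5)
Your proposal is correct and takes essentially the same approach as the paper's proof. All three parts line up: necessity via the interlacing bounds, the trace identity for $\mu_n$, and partial majorization combined with index-shifted interlacing; sufficiency via the first entries of $A$ and $B$ for interlacing and the $k=1$ and $l=k$ specializations for majorization; and feasibility via the same descending induction on $k$ with the same four $\max$-versus-$\min$ pairings (including the observation that the cross-pairings are exactly the previously enforced bounds on $\lambda_{n-1;k+1}$).
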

\begin{proof}
For the sake of notational simplicity, we let $\{ \alpha_{m} \}_{m=1}^{n-1}:=\{ \lambda_{n-1;m} \}_{m=1}^{n-1}$, $\{ \beta_{m} \}_{m=1}^{n}:=\{ \lambda_{n;m} \}_{m=1}^{n}$, $A_k:=A_{n-1;k}$, and $B_k:=B_{n-1;k}$.

($\Rightarrow$)  Suppose $\{\alpha_m\}_{m=1}^{n-1} \succeq \{\mu_{m}\}_{m=1}^{n-1}$ and $\{\alpha_m\}_{m=1}^{n-1}\sqsubseteq\{\beta_m\}_{m=1}^{n}$.  Fix any particular $k=1,\ldots,n-1$. Note that interlacing gives $\beta_{k+1}\leq\alpha_k\leq\beta_k$, which accounts for the first entries in \eqref{eq.lower bound} and \eqref{eq.upper bound}. We first show $\alpha_k\geq A_k$. Since $\{\beta_m\}_{m=1}^n\succeq\{\mu_m\}_{m=1}^n$ and $\{\alpha_m\}_{m=1}^{n-1} \succeq \{\mu_{m}\}_{m=1}^{n-1}$, then
\begin{equation}
\label{eq.pes 1}
\mu_n
=\sum_{m=1}^n\mu_m-\sum_{m=1}^{n-1}\mu_m
=\sum_{m=1}^n\beta_m-\sum_{m=1}^{n-1}\alpha_m
=\beta_n+\sum_{m=1}^{n-1}(\beta_m-\alpha_m).
\end{equation}
Since $\{\alpha_m\}_{m=1}^{n-1}\sqsubseteq\{\beta_m\}_{m=1}^{n}$, the summands in \eqref{eq.pes 1} are nonnegative, and so
\begin{equation}
\label{eq.pes 2}
\mu_n
\geq \beta_n+\sum_{m=k}^{n-1}(\beta_m-\alpha_m)
=\sum_{m=k}^n\beta_m-\sum_{m=k+1}^{n-1}\alpha_m-\alpha_k.
\end{equation}
Isolating $\alpha_k$ in \eqref{eq.pes 2} and combining with the fact that $\alpha_k\geq\beta_{k+1}$ gives $\alpha_k\geq A_k$. We next show that $\alpha_k\leq B_k$. Fix ${l}=1,\ldots,k$. Then $\{\alpha_m\}_{m=1}^{n-1} \succeq \{\mu_{m}\}_{m=1}^{n-1}$ implies $\sum_{m=1}^{{l}-1}\alpha_m\geq\sum_{m=1}^{{l}-1}\mu_m$ and $\sum_{m=1}^{n-1}\alpha_m=\sum_{m=1}^{n-1}\mu_m$, and so subtracting gives
\begin{equation}
\label{eq.pes 3}
\sum_{m={l}}^{n-1}\mu_m
\geq\sum_{m={l}}^{n-1}\alpha_m
=\sum_{m=k}^{n-1}\alpha_m+\sum_{m={l}}^{k-1}\alpha_m
\geq\sum_{m=k}^{n-1}\alpha_m+\sum_{m={l}}^{k-1}\beta_{m+1},
\end{equation}
where the second inequality follows from $\{\alpha_m\}_{m=1}^{n-1}\sqsubseteq\{\beta_m\}_{m=1}^{n}$. Since our choice for ${l}=1,\ldots,k$ was arbitrary, isolating $\alpha_k$ in \eqref{eq.pes 3} and combining with the fact that $\alpha_k\leq\beta_k$ gives $\alpha_k\leq B_k$.

($\Leftarrow$)
Now suppose $A_k\leq\alpha_k\leq B_k$ for every $k=1,\ldots,n-1$. Then the first entries in \eqref{eq.lower bound} and \eqref{eq.upper bound} give $\beta_{k+1}\leq\alpha_k\leq\beta_k$ for every $k=1,\ldots,n-1$, that is, $\{\alpha_m\}_{m=1}^{n-1}\sqsubseteq\{\beta_m\}_{m=1}^{n}$. It remains to be shown that $\{\alpha_m\}_{m=1}^{n-1} \succeq \{\mu_{m}\}_{m=1}^{n-1}$. Since $\alpha_k\leq B_k$ for every $k=1,\ldots,n-1$, then
\begin{equation}
\label{eq.pes 4}
\alpha_k
\leq\sum_{m={l}}^{n-1}\mu_m-\sum_{m={l}+1}^k\beta_m-\sum_{m=k+1}^{n-1}\alpha_m
\qquad\forall k=1,\ldots,n-1,~~{l}=1,\ldots,k.
\end{equation}
Rearranging \eqref{eq.pes 4} in the case where ${l}=k$ gives
\begin{equation}
\label{eq.pes 5}
\sum_{m=k}^{n-1}\alpha_m
\leq\sum_{m=k}^{n-1}\mu_m
\qquad\forall k=1,\ldots,n-1.
\end{equation}
Moreover, $\alpha_1\geq A_1$ implies $\alpha_1\geq\sum_{m=1}^n\beta_m-\sum_{m=2}^{n-1}\alpha_m-\mu_n$. Rearranging this inequality and applying $\{\beta_m\}_{m=1}^{n} \succeq \{\mu_{m}\}_{m=1}^{n}$ then gives
\begin{equation}
\label{eq.pes 7}
\sum_{m=1}^{n-1}\alpha_m
\geq\sum_{m=1}^n\beta_m-\mu_n
=\sum_{m=1}^{n-1}\mu_m.
\end{equation}
Combining \eqref{eq.pes 7} with \eqref{eq.pes 5} in the case where $k=1$ gives
\begin{equation}
\label{eq.pes 8}
\sum_{m=1}^{n-1}\alpha_m
=\sum_{m=1}^{n-1}\mu_m.
\end{equation}
Subtracting \eqref{eq.pes 5} from \eqref{eq.pes 8} completes the proof that $\{\alpha_m\}_{m=1}^{n-1} \succeq \{\mu_{m}\}_{m=1}^{n-1}$.

For the final claim, we first show that the claim holds for $k=n-1$, namely that $A_{n-1}\leq B_{n-1}$. Explicitly, we need to show that
\begin{equation}
\label{eq.pes 9}
\max\{\beta_n,\beta_{n-1}+\beta_n-\mu_n\}
\leq\min\bigg\{\beta_{n-1},\min_{{l}=1,\ldots,n-1}\bigg\{\sum_{m={l}}^{n-1}\mu_m-\sum_{m={l}+1}^{n-1}\beta_m\bigg\}\bigg\}.
\end{equation}
Note that \eqref{eq.pes 9} is equivalent to the following inequalities holding simultaneously:\bigskip
\begin{itemize}
\item[(i)] $\beta_n\leq\beta_{n-1}$,\bigskip
\item[(ii)] $\beta_{n-1}+\beta_n-\mu_n\leq\beta_{n-1}$,\medskip
\item[(iii)] $\displaystyle\beta_n\leq\sum_{m={l}}^{n-1}\mu_m-\sum_{m={l}+1}^{n-1}\beta_m\quad\forall{l}=1,\ldots,n-1$,
\item[(iv)] $\displaystyle\beta_{n-1}+\beta_n-\mu_n\leq\sum_{m={l}}^{n-1}\mu_m-\sum_{m={l}+1}^{n-1}\beta_m\quad\forall{l}=1,\ldots,n-1$.
\end{itemize}
First, (i) follows immediately from the fact that $\{\beta_m\}_{m=1}^n$ is nonincreasing. Next, rearranging (ii) gives $\beta_n\leq\mu_n$, which follows from $\{\beta_m\}_{m=1}^{n} \succeq \{\mu_{m}\}_{m=1}^{n}$. For (iii), the facts that $\{\beta_m\}_{m=1}^{n} \succeq \{\mu_{m}\}_{m=1}^{n}$ and $\{\mu_m\}_{m=1}^n$ is nonincreasing imply
\begin{equation*}
\sum_{m={l}+1}^n\beta_m\leq\sum_{m={l}+1}^n\mu_m\leq\sum_{m={l}}^{n-1}\mu_m\qquad \forall{l}=1,\ldots,n-1,
\end{equation*}
which in turn implies (iii).
Also for (iv), the facts that $\{\beta_m\}_{m=1}^n$ is nonincreasing and $\{\beta_m\}_{m=1}^{n} \succeq \{\mu_{m}\}_{m=1}^{n}$ imply
\begin{equation*}
\beta_{n-1}+\sum_{m={l}+1}^n\beta_m\leq\sum_{m={l}}^n\beta_m\leq\sum_{m={l}}^n\mu_m \qquad \forall{l}=1,\ldots,n-1,
\end{equation*}
which in turn implies (iv). We now proceed by induction. Assume $\alpha_{k+1}$ satisfies $A_{k+1}\leq\alpha_{k+1}\leq B_{k+1}$. Given this assumption, we need to show that $A_{k}\leq B_{k}$. Considering the definitions \eqref{eq.lower bound} and \eqref{eq.upper bound} of $A_k$ and $B_k$, this is equivalent to the following inequalities holding simultaneously:\medskip
\begin{itemize}
\item[(i)] $\beta_{k+1}\leq\beta_k$,\medskip
\item[(ii)] $\displaystyle\sum_{m=k}^n\beta_m-\sum_{m=k+1}^{n-1}\alpha_m-\mu_n\leq\beta_k$,
\item[(iii)] $\displaystyle\beta_{k+1}\leq\sum_{m={l}}^{n-1}\mu_m-\sum_{m={l}+1}^k\beta_m-\sum_{m=k+1}^{n-1}\alpha_m\quad \forall{l}=1,\ldots,k$,
\item[(iv)] $\displaystyle\sum_{m=k}^n\beta_m-\sum_{m=k+1}^{n-1}\alpha_m-\mu_n\leq\sum_{m={l}}^{n-1}\mu_m-\sum_{m={l}+1}^k\beta_m-\sum_{m=k+1}^{n-1}\alpha_m\quad\forall{l}=1,\ldots,k.$ \\
\end{itemize}
Again, the fact that $\{\beta_m\}_{m=1}^n$ is nonincreasing implies (i). Next, $\alpha_{k+1}\geq A_{k+1}$ gives
\begin{equation*}
\alpha_{k+1}\geq\sum_{m=k+1}^n\beta_m-\sum_{m=k+2}^{n-1}\alpha_m-\mu_n, 
\end{equation*}
which is a rearrangement of (ii). Similarly, $\alpha_{k+1}\leq B_{k+1}$ gives
\begin{equation*}
\alpha_{k+1}\leq\sum_{m={l}}^{n-1}\mu_m-\sum_{m={l}+1}^{k+1}\beta_m-\sum_{m=k+2}^{n-1}\alpha_m \qquad\forall{l}=1,\ldots,k+1,
\end{equation*}
which is a rearrangement of (iii). Note that we don't use the fact that (iii) holds when ${l}=k+1$. Finally, (iv) follows from the facts that $\{\beta_m\}_{m=1}^n$ is nonincreasing and $\{\beta_m\}_{m=1}^{n} \succeq \{\mu_{m}\}_{m=1}^{n}$, since they imply
\begin{equation*}
\beta_k+\sum_{m={l}+1}^n\beta_m\leq\sum_{m={l}}^n\beta_m\leq\sum_{m={l}}^n\mu_m \qquad\forall{l}=1,\ldots,k,
\end{equation*}
which is a rearrangement of (iv).
\qquad
\end{proof}

We now note that by starting with a sequence $\set{\lambda_{N;m}}_{m=1}^{N}=\set{\lambda_m}_{m=1}^{N}$ that majorizes a given $\set{\mu_m}_{m=1}^{N}$, repeatedly applying Theorem~\ref{theorem.parametrize eigensteps} to construct $\set{\lambda_{n-1;m}}_{m=1}^{n-1}$ from $\set{\lambda_{n;m}}_{m=1}^{n}$ results in a sequence of inner eigensteps that satisfy Definition~\ref{definition.inner eigensteps}.  Conversely, if $\set{\set{\lambda_{n;m}}_{m=1}^{n}}_{n=1}^{N}$ is a valid sequence of inner eigensteps, then for every $n$, (ii) gives $\set{\lambda_{n;m}}_{m=1}^{n-1}\sqsubseteq\set{\lambda_{n;m}}_{m=1}^{n}$, while (ii) and (iii) together imply that $\set{\lambda_{n;m}}_{m=1}^{n}\succeq\set{\mu_m}_{m=1}^{n}$ \`{a} la the discussion at the beginning of Section $3$; as such, any sequence of inner eigensteps can be constructed by repeatedly applying Theorem~\ref{theorem.parametrize eigensteps}.  We now summarize these facts:
\begin{corollary}
\label{corollary.parametrize eigensteps}
Let $\set{\lambda_n}_{n=1}^{N}$ and $\set{\mu_n}_{n=1}^{N}$ be nonnegative and nonincreasing where $\set{\lambda_n}_{n=1}^{N}\succeq\set{\mu_n}_{n=1}^{N}$.  Every corresponding sequence of inner eigensteps $\set{\set{\lambda_{n;m}}_{m=1}^{n}}_{n=1}^{N}$ can be constructed by the following algorithm: Let $\lambda_{N;m}=\lambda_m$ for all $m=1,\dotsc,N-1$; for any $n=N,\dotsc,2$ construct $\set{\lambda_{n-1;m}}_{m=1}^{n-1}$ from $\set{\lambda_{n;m}}_{m=1}^{n}$ by picking $\lambda_{n-1;k}\in[A_{n-1;k},B_{n-1;k}]$ for all $k=n-1,\dotsc,1$, where $A_{n-1;k}$ and $B_{n-1;k}$ are given by~\eqref{eq.lower bound} and~\eqref{eq.upper bound}, respectively.  Moreover, any sequence constructed by this algorithm is indeed a corresponding sequence of inner eigensteps.
\end{corollary}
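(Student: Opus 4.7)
The plan is to derive this corollary as a straightforward inductive consequence of Theorem~\ref{theorem.parametrize eigensteps}, iterating its single-step statement from $n=N$ down to $n=2$. The key observation is that Theorem~\ref{theorem.parametrize eigensteps} is exactly tailored to one level of the algorithm: given a sequence $\{\lambda_{n;m}\}_{m=1}^n$ which majorizes $\{\mu_m\}_{m=1}^n$, it both characterizes the admissible $\lambda_{n-1;k}$ values via the interval $[A_{n-1;k},B_{n-1;k}]$ and guarantees that these intervals are nonempty. Chaining such steps is all that is required.

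For the forward direction, I would argue by downward induction on $n$ that the algorithm indeed runs to completion and produces a valid sequence of inner eigensteps. The base case $n=N$ uses the hypothesis $\{\lambda_{N;m}\}_{m=1}^N:=\{\lambda_m\}_{m=1}^N\succeq\{\mu_m\}_{m=1}^N$. For the inductive step, assume $\{\lambda_{n;m}\}_{m=1}^n$ has been produced with $\{\lambda_{n;m}\}_{m=1}^n\succeq\{\mu_m\}_{m=1}^n$. Then Theorem~\ref{theorem.parametrize eigensteps}, applied successively for $k=n-1,\dots,1$, guarantees that each interval $[A_{n-1;k},B_{n-1;k}]$ is nonempty and that any sequence of choices therein yields $\{\lambda_{n-1;m}\}_{m=1}^{n-1}\sqsubseteq\{\lambda_{n;m}\}_{m=1}^n$ together with $\{\lambda_{n-1;m}\}_{m=1}^{n-1}\succeq\{\mu_m\}_{m=1}^{n-1}$. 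The interlacing condition (ii) of Definition~\ref{definition.inner eigensteps} is therefore satisfied at every level by construction, while the trace condition (iii) is automatic: it is precisely the equality clause~\eqref{equation.definition of majorization 2} of the majorization relation $\{\lambda_{n;m}\}_{m=1}^n\succeq\{\mu_m\}_{m=1}^n$ carried along by the induction. Since $\lambda_{N;m}=\lambda_m$ by construction, condition (i) holds as well.

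For the converse direction, let $\{\{\lambda_{n;m}\}_{m=1}^n\}_{n=1}^N$ be an arbitrary valid sequence of inner eigensteps. As observed at the start of Section $3$, properties (ii) and (iii) of Definition~\ref{definition.inner eigensteps} together force $\{\lambda_{n;m}\}_{m=1}^n\succeq\{\mu_m\}_{m=1}^n$ for every $n$. Hence for each $n=N,\dots,2$ the hypotheses of Theorem~\ref{theorem.parametrize eigensteps} are met, and the ($\Rightarrow$) direction of that theorem immediately yields $\lambda_{n-1;k}\in[A_{n-1;k},B_{n-1;k}]$ for every $k=n-1,\dots,1$. Thus the given eigensteps are exactly the output of one possible run of the algorithm.

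The main difficulty in this proof has already been absorbed into Theorem~\ref{theorem.parametrize eigensteps}: both the nonemptiness of the admissible intervals (the final claim, which required its own inductive argument on $k$) and the precise characterization of feasible one-step transitions. Given those, no substantive obstacle remains; the only care needed is to verify that the majorization hypothesis persists from one level to the next, which is built into the conclusion of Theorem~\ref{theorem.parametrize eigensteps} and therefore propagates automatically through the downward induction.
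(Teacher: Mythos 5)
Your proof is correct and takes essentially the same route as the paper: iterate Theorem~\ref{theorem.parametrize eigensteps} downward from $n=N$, noting that the majorization hypothesis $\{\lambda_{n;m}\}_{m=1}^n\succeq\{\mu_m\}_{m=1}^n$ propagates through the recursion to guarantee both feasibility and correctness, and for the converse observe (as at the start of Section~$3$) that conditions (ii) and (iii) of Definition~\ref{definition.inner eigensteps} force this majorization at every $n$, so the given eigensteps fall within the intervals characterized by the theorem. Your write-up merely makes the downward induction and the role of the nonemptiness claim a bit more explicit than the paper does.
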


We now redo Example~\ref{example.5 in 3} to illustrate that Corollary~\ref{corollary.parametrize eigensteps} indeed gives a more systematic way of parametrizing the eigensteps:

\begin{example} 
We wish to parametrize the eigensteps corresponding to UNTFs of 5 vectors in $\mathbb{R}^3$. 
In the end, we will get the same parametrization of eigensteps as in Example~\ref{example.5 in 3}:
\begin{equation} \label{eq.5in3 spectra}
\begin{tabular}{ p{1cm} p{1cm} p{1cm} p{1cm} p{1cm} l}
\ \,$n$&$1$&$2$&$3$&$4$&$5${\smallskip}\\ 
\hline\noalign{\smallskip}
$\lambda_{n;5}$ & & & & &$0${\medskip}\\
$\lambda_{n;4}$ & & & &$0$ &$0${\medskip}\\
$\lambda_{n;3}$ & & &$x$ &$\frac23$ &$\frac{5}3${\medskip}\\
$\lambda_{n;2}$ & &$y$ &$\frac{4}3-x$ &$\frac{5}3$ &$\frac{5}3${\medskip}\\
$\lambda_{n;1}$ &1 &$2-y$ &$\frac{5}3$ &$\frac{5}3$ &$\frac{5}3$\\\end{tabular}
\end{equation}
where $0\leq x\leq\frac23$, $\max\set{\frac13,x}\leq y\leq\min\set{\frac23+x,\frac43-x}$.  In what follows, we rederive the above table one column at a time, in order from right to left, and filling in each column from top to bottom.  First, the desired spectrum of the final Gram matrix gives us that $\lambda_{5,5}=\lambda_{5,4}=0$ and $\lambda_{5,3}=\lambda_{5,2}=\lambda_{5,1}=\frac{5}3$.  Next, we wish to find all $\{\lambda_{4,m}\}_{m=1}^4$ such that $\{\lambda_{4,m}\}_{m=1}^4\sqsubseteq\{\lambda_{5,m}\}_{m=1}^5$ and $\{\lambda_{4,m}\}_{m=1}^4\succeq\{\mu_{m}\}_{m=1}^4$.  To this end, taking $n=5$ and $k=4$, Theorem~\ref{theorem.parametrize eigensteps} gives
\begin{align*} 
\max\{\lambda_{5;5}, \lambda_{5;4}+\lambda_{5;5} - \mu_5\} 
\leq \lambda_{4;4} 
&\leq \min \bigg\{ \lambda_{5;4}, \min_{{l}=1,\dots,4} \bigg\{\sum_{m={l}}^{4} \mu_m - \sum_{m={l}+1}^{4} \lambda_{5;m} \bigg\}\bigg\}, \\
0
=\max\{0, -1 \} 
\leq \lambda_{4;4} 
&\leq \min\{ 0, \tfrac23, \tfrac{4}3, 2, 1\}
=0,
\end{align*}
and so $\lambda_{4;4} = 0$.  For each $k=3,2,1$, the same approach gives $\lambda_{4;3} = \frac23$, $\lambda_{4;2} = \frac{5}3$, and $\lambda_{4;1} = \frac{5}3$.  For the next column, we take $n=4$. Starting with $k=3$, we have
\begin{align*} 
\max\{ \lambda_{4;4}, \lambda_{4;3}+\lambda_{4;4} - \mu_4 \} 
\leq \lambda_{3;3} 
&\leq \min\bigg\{ \lambda_{4;3}, \min_{{l}=1,\dots,3} \bigg\{\sum_{m={l}}^3 \mu_m - \sum_{m={l}+1}^3 \lambda_{4;m} \bigg\}\bigg\}, \\
0
=\max\{0, -\tfrac{1}3\} 
\leq \lambda_{3;3}  
&\leq \min\{\tfrac23, \tfrac23, \tfrac{4}3, 1\}
=\tfrac23.
\end{align*}
Notice that the lower and upper bounds on $\lambda_{3;3}$ are not equal. Since $\lambda_{3;3}$ is our first free variable, we parametrize it: $\lambda_{3;3}=x$ for some $x\in[0,\tfrac23]$. Next, $k=2$ gives
\begin{equation*}
\tfrac43-x
=\max\{\tfrac23, \tfrac{4}3 -x\} 
\leq \lambda_{3;2} 
\leq \min\{\tfrac{5}3, \tfrac{4}3-x, 2-x\}
=\tfrac43-x,
\end{equation*}
and so $\lambda_{3;2}=\tfrac43-x$.
Similarly, $\lambda_{3;1} = \frac{5}3$. 
Next, we take $n=3$ and $k=2$:
\begin{equation*}
\max \{x, \tfrac13 \} 
\leq \lambda_{2;2}
\leq \min \{\tfrac{4}3-x, \tfrac23+x, 1\}.
\end{equation*}
Note that $\lambda_{2;2}$ is a free variable; we parametrize it as $\lambda_{2;2}=y$ such that
\begin{equation*}
y\in[\tfrac13,\tfrac23+x] \mbox{ if } x\in[0,\tfrac13],\qquad
y\in[x,\tfrac43-x] \mbox{ if } x\in[\tfrac13,\tfrac23].
\end{equation*}
Finally, $\lambda_{2,1} = 2 - y$ and $\lambda_{1,1} = 1$. 
\end{example}

We conclude by giving a complete constructive solution to Problem~\ref{problem.main}, that is, the problem of constructing every frame of a given spectrum and set of lengths.  Recall from the introduction that it suffices to prove Theorem~\ref{theorem.Step A redone}:

\emph{Proof of Theorem~\ref{theorem.Step A redone}:}
We first show that such an $F$ exists if and only if we have $\set{\lambda_m}_{m=1}^{M}\cup\{0\}_{m=M+1}^N\succeq\set{\mu_n}_{n=1}^{N}$.  Though this may be quickly proven using the Schur-Horn Theorem---see the discussion at the beginning of Section $2$---it also follows from the theory of this paper.  In particular, if such an $F$ exists, then Theorem~\ref{theorem.necessity and sufficiency of eigensteps} implies that there exists a sequence of outer eigensteps corresponding to $\set{\lambda_m}_{m=1}^{M}$ and $\set{\mu_n}_{n=1}^{N}$; by Theorem~\ref{theorem.inner vs outer}, this implies that there exists a sequence of inner eigensteps corresponding to $\set{\lambda_m}_{m=1}^{M}\cup\{0\}_{m=M+1}^N$ and $\set{\mu_n}_{n=1}^{N}$; by the discussion at the beginning of Section $3$, we necessarily have $\set{\lambda_m}_{m=1}^{M}\cup\{0\}_{m=M+1}^N\succeq\set{\mu_n}_{n=1}^{N}$.  Conversely, if $\set{\lambda_m}_{m=1}^{M}\cup\{0\}_{m=M+1}^N\succeq\set{\mu_n}_{n=1}^{N}$, then Top Kill (Theorem~\ref{theorem.top kill}) constructs a corresponding sequence of inner eigensteps, and so Theorem~\ref{theorem.inner vs outer} implies that there exists a sequence of outer eigensteps corresponding to $\set{\lambda_m}_{m=1}^{M}$ and $\set{\mu_n}_{n=1}^{N}$, at which point Theorem~\ref{theorem.necessity and sufficiency of eigensteps} implies that such an $F$ exists. 

For the remaining conclusions, note that in light of Theorem~\ref{theorem.necessity and sufficiency of eigensteps}, it suffices to show that every valid sequence of outer eigensteps (Definition~\ref{definition.outer eigensteps}) satisfies the bounds of Step~A of Theorem~\ref{theorem.Step A redone}, and conversely, that every sequence constructed by Step~A is a valid sequence of outer eigensteps.  Both of these facts follow from the same two results.  The first is Theorem~\ref{theorem.inner vs outer}, which establishes a correspondence between every valid sequence of outer eigensteps for $\set{\lambda_m}_{m=1}^{M}$ and $\set{\mu_n}_{n=1}^{N}$ with a valid sequence of inner eigensteps for $\set{\lambda_m}_{m=1}^{M}\cup\set{0}_{m=M+1}^{N}$ and $\set{\mu_n}_{n=1}^{N}$ and vice versa, the two being zero-padded versions of each other.  The second relevant result is Corollary~\ref{corollary.parametrize eigensteps}, which characterizes all such inner eigensteps in terms of the bounds~\eqref{eq.lower bound} and~\eqref{eq.upper bound} of Theorem~\ref{theorem.parametrize eigensteps}.  In short, the algorithm of Step~A is the outer eigenstep version of the application of Corollary~\ref{corollary.parametrize eigensteps} to $\set{\lambda_m}_{m=1}^{M}\cup\set{0}_{m=M+1}^{N}$; one may easily verify that all discrepancies between the statement of Theorem~\ref{theorem.Step A redone} and Corollary~\ref{corollary.parametrize eigensteps} are the result of the zero-padding that occurs in the transition from inner to outer eigensteps.
\qquad
\endproof

\section*{Acknowledgments}
This work was supported by NSF DMS 1042701, NSF CCF 1017278, AFOSR F1ATA01103J001, AFOSR F1ATA00183G003 and the A.~B.~Krongard Fellowship.  The views expressed in this article are those of the authors and do not reflect the official policy or position of the United States Air Force, Department of Defense, or the U.S.~Government.

\newpage

\end{document}